\theoremstyle{plain}
\newtheorem{theorem}{Theorem}[section]
\newtheorem{lemma}[theorem]{Lemma}
\newtheorem{proposition}[theorem]{Proposition}
\newtheorem{corollary}[theorem]{Corollary}
\newcommand{\supp}{\mathop{\mathrm{supp}}\nolimits}
\theoremstyle{definition} 
\newtheorem{definition}[theorem]{Definition}
\numberwithin{equation}{section}
\theoremstyle{remark}
\def\Xint#1{\mathchoice
{\XXint\displaystyle\textstyle{#1}}%
{\XXint\textstyle\scriptstyle{#1}}%
{\XXint\scriptstyle\scriptscriptstyle{#1}}%
{\XXint\scriptscriptstyle\scriptscriptstyle{#1}}%
\!\int}
\def\XXint#1#2#3{{\setbox0=\hbox{$#1{#2#3}{\int}$}
\vcenter{\hbox{$#2#3$}}\kern-.5\wd0}}
\def\dashint{\Xint-}
\title[Vector valued inequalities]
{Vector valued inequalities and Littlewood-Paley operators on 
Hardy spaces}  
\author{Shuichi Sato} 
\begin{document} 

\address{Department of Mathematics,
Faculty of Education, Kanazawa University, Kanazawa 920-1192, Japan}
\email{shuichi@kenroku.kanazawa-u.ac.jp}
\begin{abstract} 
We prove certain vector valued inequalities on $\Bbb R^n$ related to 
Littlewood-Paley theory. They can be used in proving 
characterization of the Hardy spaces in terms of  Littlewood-Paley operators by methods of real analysis. 
\end{abstract}
  \thanks{2010 {\it Mathematics Subject Classification.\/}
  Primary  42B25; Secondary 42B30.
  \endgraf
  {\it Key Words and Phrases.} Vector valued inequalities, 
  Littlewood-Paley functions,
  Hardy spaces.  }
\thanks{The author is partly supported
by Grant-in-Aid for Scientific Research (C) No. 25400130, Japan Society for the  Promotion of Science.}

\maketitle 

\section{Introduction}  

We consider the Littlewood-Paley function on $\Bbb R^n$ defined by 
\begin{equation}\label{lpop}
g_{\varphi}(f)(x) = \left( \int_0^{\infty}|f*\varphi_t(x)|^2
\,\frac{dt}{t} \right)^{1/2},    
\end{equation} 
where $\varphi_t(x)=t^{-n}\varphi(t^{-1}x)$.   
We assume that $\varphi \in L^1(\Bbb R^n)$  and   
\begin{equation}\label{cancell}
\int_{\Bbb R^n} \varphi (x)\,dx = 0.  
\end{equation} 
\par 
If we further assume that $|\varphi(x)|\leq C(1+|x|)^{-n-\epsilon}$ for 
some $\epsilon>0$, then we have 
$$\|g_{\varphi}(f)\|_p\leq C_p\|f\|_p, \quad 1<p<\infty,  $$  
where $\|f\|_p=\|f\|_{L^p}$ (see \cite{Sa2} and also \cite{BCP} for 
an earlier result).  The reverse inequality also holds if a certain
 non-degeneracy condition on $\varphi$ is assumed in addition (see 
 \cite[Theorem 3.8]{H} and also \cite{Sa}).  
 This is the case for $g_Q$ with $Q(x)= [(\partial/\partial t) P(x,t)]_{t=1}$, 
where $P(x,t)$ is the Poisson kernel associated with the upper half space 
$\Bbb R^n \times (0, \infty)$ defined by 
$$P(x,t)=c_n \frac{t}{(|x|^2+t^2)^{(n+1)/2}} $$ 
with $c_n=\pi^{-(n+1)/2}\Gamma((n+1)/2)$ (see \cite[Chap. I]{SW}). 
Here we recall that $\hat{Q}(\xi)=-2\pi|\xi|e^{-2\pi|\xi|}$, 
where the Fourier transform is defined as 
$$
\hat{f}(\xi)=\mathscr F(f)(\xi)=\int_{\Bbb R^n} f(x)e^{-2\pi i\langle x,\xi
\rangle}\, dx, \quad \langle x,\xi\rangle=x_1\xi_1+\dots +x_n\xi_n. 
$$  
\par 
Furthermore,  it is known that 
\begin{equation}\label{hpequiv}
c_1\|f\|_{H^p}\leq \|g_Q(f)\|_{p}\leq c_2\|f\|_{H^p} 
\end{equation}   
for $f\in H^p(\Bbb R^n)$ (the Hardy space), $0<p<\infty$, 
where   $c_1, c_2$ are positive constants 
(see \cite{FeS2} and also \cite{U}).  
Recall that a tempered distribution $f$ belongs to 
$H^p(\Bbb R^n)$ if 
$\|f\|_{H^p}=\|f^*\|_p<\infty$, where $f^*(x)=\sup_{t>0}|\Phi_t*f(x)|$.  
Here  $\Phi$ is in $\mathscr S(\Bbb R^n)$ and satisfies $\int \Phi(x)\, dx=1$, 
where $\mathscr S(\Bbb R^n)$ denotes the Schwartz 
class of rapidly decreasing smooth functions on $\Bbb R^n$; 
it is known that any other choice of such $\Phi$ gives an equivalent norm 
(see \cite{FeS2}). 
\par 
In this note we are concerned with the first inequality of \eqref{hpequiv} 
for $0<p\leq 1$.  
A proof of the inequality was given by Uchiyama \cite{U}. The proof is 
based on  real analysis methods and does not use special properties of 
the Poisson kernel such as  harmonicity, a semigroup property.  
Consequently,  \cite{U} can also prove 
\begin{equation}\label{reverse}
\|f\|_{H^p}\leq c\|g_\varphi(f)\|_p,   \quad 0<p\leq 1, 
\end{equation}
for $\varphi \in \mathscr S(\Bbb R^n)$ satisfying \eqref{cancell} and  
a suitable non-degeneracy condition.  
Also, a relation between Hardy spaces on homogeneous groups and 
Littlewood-Paley functions associated with the heat kernel  
 can be found in \cite[Chap. 7]{FoS}. 
\par 
On the other hand, 
it is known and would be seen by applying an easier version of our arguments 
in the following that  
the Peetre maximal function $F^{**}_{N,R}$ can be used along with familiar   
methods to prove \eqref{reverse} when $\varphi \in \mathscr S(\Bbb R^n)$ 
with a  non-degeneracy condition and with the condition $\supp(\hat{\varphi}) 
\subset \{a_1\leq |\xi|\leq a_2\}$, $a_1, a_2>0$, 
where  for a function $F$ on $\Bbb R^n$ and positive real numbers $N, R$,  
the maximal function is defined as 
\begin{equation}\label{pmax}
F^{**}_{N,R}(x)=\sup_{y\in \Bbb R^n}\frac{|F(x-y)|}{(1+R|y|)^N}   
\end{equation} 
(see \cite{P}).  
\par 
The purpose of this note is to prove \eqref{reverse} for a class of functions 
$\varphi$ including $Q$ and a general $\varphi \in  \mathscr S(\Bbb R^n)$, 
without the restriction on $\supp(\hat{\varphi})$ above, 
with \eqref{cancell} and an admissible non-degeneracy condition 
(Corollary \ref{C3.1}) as an 
application of a vector valued inequality which will be shown by using 
the maximal function $F^{**}_{N,R}$ (see Proposition \ref{T2.3},
 Theorem \ref{C2.10} below).  
The proof of Proposition \ref{T2.3} consists partly in 
further developing methods of \cite[Chap. V]{ST} and it admits some weighted 
inequalities.   
Theorem \ref{C2.10} follows from Proposition \ref{T2.3}.  
Our proofs of  Proposition \ref{T2.3} and 
Corollary \ref{C3.1} are fairly 
straightforward and they will be expected to extend to some other situations.  
\par  
In Section 2, Proposition \ref{T2.3} will be formulated in a general form, 
while 
Theorem \ref{C2.10} will be stated in a more convenient form for the 
application to the proof of Corollary \ref{C3.1}.   
In Section 3, we shall apply Theorem \ref{C2.10} and an atomic decomposition 
for Hardy spaces to prove Corollary \ref{C3.1}.  
Finally, in Section 4, we shall give 
proofs of Lemmas \ref{L2.1} and \ref{L2.5} in Section 2 from \cite{ST} and 
\cite{P}, respectively, for completeness; the lemmas will be needed in proving 
Proposition \ref{T2.3}.

\section{Vector valued inequalities}  
Let $\varphi^{(j)}$, $j=1, 2, \dots, M$, be functions in $L^1(\Bbb R^n)$  
satisfying the non-degeneracy condition 
\begin{equation}\label{nondegeneracy} 
\inf_{\xi\in \Bbb R^n\setminus\{0\}}\sup_{t>0}\sum_{j=1}^M
|\mathscr F(\varphi^{(j)})(t\xi)|> c  
\end{equation}
for some positive constant $c$. 
We write $\varphi=(\varphi^{(1)}, \dots, \varphi^{(M)})$,  $\hat{\varphi}
=(\mathscr F(\varphi^{(1)}), \dots, \mathscr F(\varphi^{(M)}))$.  

\begin{lemma}\label{L2.1} 
Let $\varphi^{(j)}$, $j=1, 2, \dots, M$, be functions in 
$L^1(\Bbb R^n)$ satisfying  \eqref{nondegeneracy}.  Then, there 
exist $b_0\in (0,1)$ and positive numbers $r_1, r_2$ with $r_1<r_2$ such that 
if $b\in [b_0, 1)$,  we can find $\eta=(\eta^{(1)}, \dots, \eta^{(M)})$ which 
satisfies the following$:$  
\begin{enumerate}
\item[(1)] $\eta\in C^\infty(\Bbb R^n)$, where $\eta\in C^k(U)$ 
means  $\eta^{(j)}\in C^k(U)$   for all $1\leq j\leq M; $  
\item[(2)] $\supp \mathscr F(\eta^{(j)}) \subset \{r_1<|\xi|<r_2\}, 
1\leq j\leq M;$  
\item[(3)] each $\mathscr F(\eta^{(j)})$ is continuous, $1\leq j\leq M; $ 
\item[(4)]
$\sum_{j=-\infty}^\infty \langle \hat{\varphi}(b^j\xi), 
\hat{\eta}(b^j\xi)\rangle =1$ \quad for $\xi\in \Bbb R^n\setminus\{0\}$, 
where $\langle z,w\rangle=\sum_{j=1}^M z_jw_j$, $z, w\in \Bbb C^M$ $($the 
Cartesian product of $M$ copies of the set of complex numbers$)$. 
\end{enumerate}  
Further, if $\hat{\varphi}\in C^k(\Bbb R^n\setminus\{0\})$, then $\hat{\eta}
\in C^k(\Bbb R^n)$.   
\end{lemma}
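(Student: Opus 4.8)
The plan is to construct $\eta$ directly from a smooth partition-of-unity type device adapted to the dilation structure, exploiting the non-degeneracy condition \eqref{nondegeneracy} on $\hat\varphi$. First I would use \eqref{nondegeneracy} to produce, for each $\xi$ on the unit sphere, a scale $t$ at which $\sum_j|\mathscr F(\varphi^{(j)})(t\xi)|$ exceeds $c$; by continuity of the Fourier transforms of the $L^1$ functions $\varphi^{(j)}$ and a compactness argument on the sphere, this can be made uniform, so that on a fixed annulus $\{r_1'<|\xi|<r_2'\}$ one has $\Psi(\xi):=\sum_{j=1}^M|\mathscr F(\varphi^{(j)})(\xi)|^2 \geq c'>0$. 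Choosing $b_0$ close enough to $1$ guarantees that the $b$-dilates $\{b^{-j}\cdot(\text{annulus})\}_{j\in\Bbb Z}$ cover $\Bbb R^n\setminus\{0\}$ with bounded overlap for every $b\in[b_0,1)$; this is where the lower bound $b_0$ and the radii $r_1<r_2$ enter.

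Next I would fix a nonnegative $\Theta\in C_c^\infty(\Bbb R^n)$ with $\supp\Theta\subset\{r_1<|\xi|<r_2\}$ (a slight enlargement of the annulus above) and with $\Theta>0$ on the smaller annulus where $\Psi\geq c'$, and set
\begin{equation*}
\Lambda(\xi)=\sum_{k=-\infty}^{\infty}\Theta(b^k\xi)\Psi(b^k\xi),
\end{equation*}
which is a locally finite sum, strictly positive, smooth on $\Bbb R^n\setminus\{0\}$, and homogeneous of degree $0$ with respect to the dilation $\xi\mapsto b\xi$. Then I would define
\begin{equation*}
\mathscr F(\eta^{(j)})(\xi)=\frac{\Theta(\xi)\,\overline{\mathscr F(\varphi^{(j)})(\xi)}}{\Lambda(\xi)}.
\end{equation*}
Each such function is supported in $\{r_1<|\xi|<r_2\}$, giving (2); it is continuous (indeed as smooth as $\hat\varphi$ permits — $C^\infty$ when $\hat\varphi\in C^\infty$ away from $0$, which holds because the $\varphi^{(j)}$ need only be $L^1$, so smoothness of $\hat\varphi$ is not automatic and one must be slightly careful, but $\Theta$ vanishes near $0$ so the quotient inherits the regularity of $\hat\varphi$ on the annulus), yielding (3) and the final clause about $C^k$; and since the Fourier transform is a bijection on $\mathscr S'$ with these compactly-supported smooth symbols lying in $\mathscr S$, $\eta^{(j)}\in C^\infty(\Bbb R^n)$, giving (1). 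For (4), compute
\begin{equation*}
\sum_{j=-\infty}^{\infty}\langle\hat\varphi(b^j\xi),\hat\eta(b^j\xi)\rangle
=\sum_{j}\frac{\Theta(b^j\xi)}{\Lambda(b^j\xi)}\sum_{l=1}^M\mathscr F(\varphi^{(l)})(b^j\xi)\overline{\mathscr F(\varphi^{(l)})(b^j\xi)}
=\sum_{j}\frac{\Theta(b^j\xi)\Psi(b^j\xi)}{\Lambda(b^j\xi)},
\end{equation*}
and since $\Lambda$ is $b$-dilation invariant, $\Lambda(b^j\xi)=\Lambda(\xi)$, so the sum collapses to $\Lambda(\xi)/\Lambda(\xi)=1$.

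The main obstacle I anticipate is the uniformity step: passing from the pointwise-in-$\xi$ non-degeneracy \eqref{nondegeneracy} to a single fixed annulus on which $\Psi$ is bounded below, with radii $r_1,r_2$ that work simultaneously for all $b$ in an interval $[b_0,1)$. This requires combining the $0$-homogeneity of the condition under $\xi\mapsto t\xi$ with a compactness/continuity argument on the unit sphere, and then checking that shrinking $1-b$ only thickens the covering without destroying the bounded-overlap property — in particular one must verify that the finitely many dilates of the annulus that one needs remain within a common larger annulus uniformly in $b\in[b_0,1)$. Everything after that is the routine verification that the explicitly defined $\eta^{(j)}$ has the listed properties, with the only mild subtlety being to keep $\Theta$ supported away from the origin so that dividing by $\Lambda$ and the regularity transfer from $\hat\varphi$ cause no trouble.
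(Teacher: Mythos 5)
Your overall architecture coincides with the paper's: define $\mathscr F(\eta^{(j)})$ as a cutoff times $\overline{\mathscr F(\varphi^{(j)})}$ divided by a $b$-dilation-invariant normalizer $\Lambda$, so that the sum in (4) telescopes to $\Lambda/\Lambda=1$. The telescoping computation and the verification of (2)--(4) are fine, and for (1) what you actually need is only that $\mathscr F(\eta^{(j)})$ is continuous with compact support (smoothness of $\hat\varphi$ is irrelevant to that point). However, there is a genuine gap at exactly the step you flag as the main obstacle: the claim that compactness upgrades \eqref{nondegeneracy} to a lower bound $\sum_j|\mathscr F(\varphi^{(j)})(\xi)|^2\geq c'$ on a single round annulus $\{r_1'<|\xi|<r_2'\}$ is false in general. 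Condition \eqref{nondegeneracy} only says that for each direction $\omega\in S^{n-1}$ there is \emph{some} scale $t(\omega)$ at which the sum is large, and these scales can vary with $\omega$ so that no round annulus avoids the zero set: for instance $\hat\varphi$ may be supported in a wavy shell $\{\xi:\,|\xi|\,g(\xi/|\xi|)\in[1-\delta,1+\delta]\}$ for a continuous $g$ with large oscillation, and then every round annulus meeting the shell also contains points where $\hat\varphi$ vanishes. Since your $\Theta$ is supported in one annulus and your lower bound for $\Lambda(\xi)=\sum_k\Theta(b^k\xi)\Psi(b^k\xi)$ relies on $\Psi\geq c'$ throughout $\supp\Theta$, the strict positivity of $\Lambda$ --- on which the entire construction hinges --- is not established.

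The repair is the device the paper uses. Compactness of $S^{n-1}$ yields a \emph{finite family} of compact intervals $I_1,\dots,I_L\subset(0,\infty)$ and $c>0$ such that for every $\omega\in S^{n-1}$ at least one index $h$ satisfies $\inf_{t\in I_h}\sum_{i}|\mathscr F(\varphi^{(i)})(t\omega)|^2\geq c$; it is a maximum over finitely many intervals, not one interval valid for all directions. One then sets $b_0=\max_h(a_h/b_h)$ with $I_h=[a_h,b_h]$, so that for $b\in[b_0,1)$ the geometric sequence $\{b^jt\}_{j\in\Bbb Z}$ cannot skip over any $I_h$, and takes a radial cutoff $\theta$ equal to $1$ on an interval $[m,H]$ containing all the $I_h$. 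For each $\xi\neq 0$ the term of $\Lambda$ with $b^j|\xi|$ lying in the good interval for the direction $\xi/|\xi|$ then already contributes at least $c$, which gives the needed lower bound $\Lambda\geq c$ on $\Bbb R^n\setminus\{0\}$. With that substitution the rest of your argument goes through verbatim.
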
  
See \cite[Chap. V]{ST} and also \cite{CT}. 
\par 
We assume that $M=1$ for simplicity. 
Suppose that $\psi \in L^1(\Bbb R^n)$  and there exist  
$\Theta\in C^\infty(\Bbb R^n)$ and $A\geq 1$ such that 
\begin{equation}\label{nearorigin} 
\hat{\psi}(\xi)=\hat{\varphi}(\xi)\Theta(\xi)   
\quad \text{on $\{|\xi|<r_2A^{-1} \}$. } 
\end{equation}
Suppose that $b\in [b_0,1)$ and let $\eta$ be as in Lemma \ref{L2.1} 
with $M=1$.  
For $J>0$, define $\zeta_J$ by 
\begin{equation}\label{2.3+}
\hat{\zeta}_J(\xi)=1-\sum_{j: b^j\leq J} \hat{\varphi}(b^j\xi)
\hat{\eta}(b^j\xi).    
\end{equation} 
We note that $\supp(\hat{\zeta}_J)\subset \{|\xi|\leq r_2J^{-1}\}$, 
$\hat{\zeta}_J=1$ in $\{|\xi|<r_1J^{-1}\}$.  By \eqref{nearorigin} it follows 
that 
\begin{align*} 
\hat{\psi}(\xi)&=\sum_{j: b^j\leq A}
\hat{\psi}(\xi)\hat{\varphi}(b^j\xi) \hat{\eta}(b^j\xi)
+ \hat{\zeta}_A(\xi)\hat{\psi}(\xi)  
\\ 
&=\sum_{j: b^j\leq A} \hat{\varphi}(b^j\xi)\mathscr F(\alpha^{(b^j)})(b^j\xi) 
+  \hat{\varphi}(\xi) \hat{\beta}(\xi), 
\end{align*}
where 
$\alpha^{(b^j)}(x)=\psi_{b^{-j}}*\eta(x)$ 
and 
$\hat{\beta}(\xi)=\hat{\zeta}_A(\xi)\Theta(\xi)$.   
\par 
 Let $E(\psi,f)(x,t)=f*\psi_t(x)$, $f\in \mathscr S(\Bbb R^n)$. 
Then we have 
\begin{equation}\label{ineq1} 
|E(\psi,f)(x,t)|\leq   \sum_{j: b^j\leq A}|E(\alpha^{(b^j)}*\varphi,f)(x,b^jt)|
+|E(\beta*\varphi,f)(x,t)|.  
\end{equation} 
Also, let $ E_\psi(x,t)=E(\psi,f)(x,t)$,  when $f$ is fixed.    
\par 
Define 
\begin{equation}\label{czero} 
C_0(\psi,t,L,x)= (1+|x|)^L\left|\int \hat{\psi}(t^{-1}\xi)\hat{\eta}(\xi)
e^{2\pi i\langle x, \xi\rangle}\, d\xi\right|, \quad t>0, L\geq 0. 
\end{equation} 
Consequently, 
\begin{equation*} 
|\alpha^{(b^j)}_s(x)|= C_0(\psi,b^j,L,x/s)s^{-n}(1+|x|/s)^{-L}     
\end{equation*}  
for $j\in \Bbb Z$ (the set of integers). 
Likewise, we have  
\begin{equation*} 
|\beta_s(x)|=D(\Theta,A, L,x/s)s^{-n}(1+|x|/s)^{-L},     
\end{equation*}  
where 
\begin{equation}\label{d}
D(\Theta,J, L,x)= (1+|x|)^L\left|\int \hat{\zeta}_J(\xi)\Theta(\xi)e^{2\pi i
\langle x, \xi\rangle}\, d\xi\right|.   
\end{equation}  
Here $\hat{\zeta}_J$ is as in \eqref{2.3+}. 
We also write $C(\psi,j, L,x)=C_0(\psi,b^j,L,x)$, $j\in \Bbb Z$.    
Let  
\begin{gather}\label{c} 
C(\psi,j, L)= \int_{\Bbb R^n} C(\psi,j, L,x)\, dx, \quad j\in \Bbb Z,    
\\ 
\label{d2} 
D(\Theta,J, L)= \int_{\Bbb R^n} D(\Theta,J, L,x)\, dx.  
\end{gather} 
We also write 
$C(\psi,j, L)=C_\varphi(\psi,j, L)$, 
$D(\Theta,J, L)=D_\varphi(\Theta,J, L)$  
to indicate that these quantities are based on $\varphi$.  
See Lemma \ref{L2.8} below for a sufficient condition which implies  
$C(\psi,j, L)<\infty$, $D(\Theta,J, L)<\infty$.   
\par 
The maximal function in \eqref{pmax} is used in the following result.  
\begin{lemma}\label{L2.2} 
Let $\varphi, \psi\in L^1(\Bbb R^n)$. Suppose that 
 $\varphi$ satisfies \eqref{nondegeneracy}. Let $b\in [b_0,1)$.  
We assume that $\psi$ and $\varphi$ are related by \eqref{nearorigin} 
with $\Theta\in C^\infty(\Bbb R^n)$ and $A\geq 1$. 
Let $N>0$.   Then for $f\in \mathscr S(\Bbb R^n)$, we have 
\begin{multline}\label{ineq2}
|E(\psi,f)(x,t)| 
\leq C\sum_{j: b^j\leq A}
C(\psi,j,N) E(\varphi,f)(\cdot,b^jt)^{**}_{N, (b^jt)^{-1}}(x) 
\\ 
+CD(\Theta,A,N) E(\varphi,f)(\cdot,t)^{**}_{N, t^{-1}}(x);  
\end{multline} 
\begin{multline}  
\label{ineq3} 
E(\psi,f)(\cdot,t)^{**}_{N, t^{-1}}(x) 
\leq C\sum_{j: b^j\leq A}
C(\psi, j,N)b^{-jN} E(\varphi,f)(\cdot,b^jt)^{**}_{N, (b^jt)^{-1}}(x) 
\\ 
+CD(\Theta,A, N) E(\varphi,f)(\cdot,t)^{**}_{N, t^{-1}}(x).     
\end{multline}  
\end{lemma}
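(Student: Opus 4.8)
The starting point is the pointwise bound \eqref{ineq1}, which is already at our disposal; the plan is to dominate each of the two kinds of terms on its right-hand side by a Peetre maximal function of $E(\varphi,f)$. First I would observe that, for every $s>0$,
$$
E(\alpha^{(b^j)}*\varphi,f)(\cdot,s)=\bigl(\alpha^{(b^j)}*\varphi\bigr)_s*f
=\alpha^{(b^j)}_s*\varphi_s*f=\alpha^{(b^j)}_s*E(\varphi,f)(\cdot,s),
$$
using the dilation identity $(g*h)_s=g_s*h_s$, and likewise $E(\beta*\varphi,f)(\cdot,t)=\beta_t*E(\varphi,f)(\cdot,t)$. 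Writing these convolutions as integrals and inserting the inequality $|F(x-y)|\le F^{**}_{N,R}(x)(1+R|y|)^N$, which is just the definition \eqref{pmax}, with $F=E(\varphi,f)(\cdot,s)$ and $R=s^{-1}$, one obtains
$$
|E(\alpha^{(b^j)}*\varphi,f)(x,s)|
\le E(\varphi,f)(\cdot,s)^{**}_{N,s^{-1}}(x)\int|\alpha^{(b^j)}_s(y)|(1+s^{-1}|y|)^N\,dy,
$$
and the analogous estimate for the $\beta$-term with $\int|\beta_s(y)|(1+s^{-1}|y|)^N\,dy$ in place of the last integral.

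The second step is to evaluate those weighted $L^1$ integrals. Taking the exponent $L=N$ in the pointwise formulas $|\alpha^{(b^j)}_s(y)|=C_0(\psi,b^j,N,y/s)s^{-n}(1+|y|/s)^{-N}$ and $|\beta_s(y)|=D(\Theta,A,N,y/s)s^{-n}(1+|y|/s)^{-N}$ recorded before the statement, the weight $(1+s^{-1}|y|)^N$ cancels the factor $(1+|y|/s)^{-N}$, and after the substitution $y=sw$ the integrals collapse to $\int C_0(\psi,b^j,N,w)\,dw=C(\psi,j,N)$ and $\int D(\Theta,A,N,w)\,dw=D(\Theta,A,N)$ by \eqref{czero}, \eqref{d}, \eqref{c} and \eqref{d2}. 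Choosing $s=b^jt$ in the first family of terms and $s=t$ in the $\beta$-term, and summing over $\{j:b^j\le A\}$ as in \eqref{ineq1}, yields \eqref{ineq2} (in fact with $C=1$).

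For \eqref{ineq3} I would apply \eqref{ineq2} with $x$ replaced by $x-y$ and then transfer the translation onto the maximal functions on the right. The elementary fact needed is that, for every $F$ and every $R>0$,
$$
F^{**}_{N,R}(x-y)\le(1+R|y|)^N\,F^{**}_{N,R}(x),
$$
which follows from $(1+R|w|)^N\le(1+R|w-y|)^N(1+R|y|)^N$ after the change of variable $w=y+z$ in the supremum defining $F^{**}_{N,R}(x-y)$. Applying this with $F=E(\varphi,f)(\cdot,b^jt)$ and $R=(b^jt)^{-1}=b^{-j}t^{-1}$, one compares $(1+b^{-j}t^{-1}|y|)^N$ with $b^{-jN}(1+t^{-1}|y|)^N$: when $b^j\le1$ one has $(1+b^{-j}a)^N\le b^{-jN}(1+a)^N$ directly, while for the finitely many indices with $1<b^j\le A$ one has $(1+b^{-j}a)^N\le(1+a)^N\le A^Nb^{-jN}(1+a)^N$ since then $b^{-jN}\ge A^{-N}$; in either case the factor produced is at most $A^Nb^{-jN}(1+t^{-1}|y|)^N$. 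For the $\beta$-term one uses the same translation inequality with $R=t^{-1}$, so no power of $b$ appears. Dividing the resulting estimate of $|E(\psi,f)(x-y,t)|$ by $(1+t^{-1}|y|)^N$ and taking the supremum over $y$ gives \eqref{ineq3} with $C=A^N$.

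Once \eqref{ineq1} and the pointwise identities for $\alpha^{(b^j)}_s$ and $\beta_s$ are in hand, the argument is essentially bookkeeping. The only slightly delicate point is the last step: tracking how the dilation factor $b^{-j}$ inside the Peetre weight turns into $b^{-jN}$ in \eqref{ineq3}, and absorbing into $C$ the fixed constant $A^N$ contributed by the indices with $1<b^j\le A$ (which occur only when $A>1$). No finiteness of $C(\psi,j,N)$ or $D(\Theta,A,N)$ needs to be assumed here: if a right-hand side is infinite the asserted inequality holds trivially, and otherwise the integrals above converge absolutely since $f\in\mathscr S(\Bbb R^n)$ keeps $f*\varphi_s$ bounded.
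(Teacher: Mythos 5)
Your proof is correct and follows essentially the same route as the paper: both start from \eqref{ineq1}, write each term as a convolution of $E(\varphi,f)(\cdot,s)$ with $\alpha^{(b^j)}_s$ or $\beta_s$, and let the Peetre weight $(1+s^{-1}|y|)^N$ cancel against the factor $(1+|y|/s)^{-N}$ in the pointwise identities for $|\alpha^{(b^j)}_s|$ and $|\beta_s|$, so that only $C(\psi,j,N)$ and $D(\Theta,A,N)$ survive. The one organizational difference is that you deduce \eqref{ineq3} from \eqref{ineq2} via the submultiplicativity $F^{**}_{N,R}(x-y)\le(1+R|y|)^N F^{**}_{N,R}(x)$ together with the scale comparison producing $A^N b^{-jN}$, whereas the paper carries the weight $(1+|x-z|/t)^{-N}$ through the convolution estimate and uses the corresponding three-point inequality; these are the same computation packaged differently.
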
 
\begin{proof} 
Using \eqref{ineq1},  we see that  
\begin{multline*}  
|E_\psi(z,t)| 
\\ 
\leq C \sum_{j: b^j\leq A}\int |E_\varphi(y,b^jt)|
\left(1+\frac{|z-y|}{b^jt}\right)^{-N}C(\psi, j, N,(z-y)/(b^jt)) 
(b^jt)^{-n}\, dy 
\\ 
+C\int |E_\varphi(y,t)|
\left(1+\frac{|z-y|}{t}\right)^{-N}D(\Theta,A, N,(z-y)/t) 
t^{-n}\, dy.    
\end{multline*} 
If we multiply both sides of the inequality by $(1+|x-z|/t)^{-N}$ and 
observe that 
\begin{equation*} 
\left(1+\frac{|z-y|}{b^jt}\right)^{-N}
\left(1+\frac{|x-z|}{t}\right)^{-N}\leq C_{A,N}b^{-Nj}
\left(1+\frac{|x-y|}{b^jt}\right)^{-N}
\end{equation*}  
for all $x, y, z\in \Bbb R^n$ and $t>0$ under the condition $b^j\leq A$,  
then  we see that  
\begin{align*}  
&|E_\psi(z,t)|(1+|x-z|/t)^{-N}
\\ 
&\leq C \sum_{j: b^j\leq A}b^{-Nj}\int |E_\varphi(y,b^jt)|
\left(1+\frac{|x-y|}{b^jt}\right)^{-N}C(\psi, j, N,(z-y)/(b^jt))
(b^jt)^{-n}\, dy 
\\ 
&\phantom{ \leq\, }+ C\int |E_\varphi(y,t)|
\left(1+\frac{|x-y|}{t}\right)^{-N}D(\Theta,A,N,(z-y)/t)
t^{-n}\, dy, 
\end{align*}
and hence 
\begin{align*}  
&|E_\psi(z,t)|(1+|x-z|/t)^{-N}
\\ 
&\leq C \sum_{j: b^j\leq A}b^{-Nj}E_\varphi(\cdot,b^jt)^{**}
_{N, (b^jt)^{-1}}(x)\int C(\psi, j, N,(z-y)/(b^jt))
(b^jt)^{-n}\, dy 
\\ 
&\phantom{ \leq\, }+C E_\varphi(\cdot,t)^{**}
_{N, t^{-1}}(x)\int D(\Theta,A, N,(z-y)/t)t^{-n}\, dy 
\\ 
&\leq C \sum_{j: b^j\leq A}C(\psi,j,N)b^{-Nj}E_\varphi(\cdot,b^jt)^{**}
_{N, (b^jt)^{-1}}(x) 
+CD(\Theta,A,N) E_\varphi(\cdot,t)^{**}_{N, t^{-1}}(x). 
\end{align*}
The estimate \eqref{ineq3} follows by taking the supremum in $z$ over 
$\Bbb R^n$. 
The proof of \eqref{ineq2} is easier; putting $z=x$ and arguing as above, 
we get \eqref{ineq2}.   
\end{proof} 
\par 
Let $ \varphi \in L^1(\Bbb R^n)$. Suppose that $\varphi$ satisfies 
\eqref{nondegeneracy}. 
Let $L>0$. 
We consider the following conditions.  
\begin{gather} \label{alpha} 
\varphi\in C^1(\Bbb R^n), \quad \partial_{k}\varphi\in L^1(\Bbb R^n), 
\quad 1\leq k\leq n;  
\\  \label{beta} 
|\hat{\varphi}(\xi)|\leq C|\xi|^\epsilon \quad \text{for some $\epsilon>0;$} 
\\ \label{ineq5} 
\sup_{j\geq 0}C_\varphi(\nabla\varphi,j,L)b^{-jL-\epsilon j}<\infty 
\quad 
\text{for some $\epsilon>0$, together with \eqref{alpha};   } 
\\ \label{ineq5+} 
D_\varphi(L)<\infty, 
\quad 
\text{ with \eqref{alpha},  }
\end{gather}  
where we write  
$\nabla\varphi=(\partial_{1}\varphi, \dots , \partial_{n}\varphi)$, 
$\partial_k=\partial_{x_k}=\partial/\partial_{x_k}$  and 
$C_\varphi(\nabla\varphi,j,L)=\sum_{k=1}^n 
C_\varphi(\partial_{k}\varphi, j,L)$; 
also we define 
$D_\varphi(L)=\sum_{k=1}^n 
D_\varphi(\Xi_k,1,L)$ 
by taking $\Theta(\xi)=\Xi_k(\xi)=2\pi i\xi_k$ and $J=1$ in \eqref{d2}. 
We note that \eqref{alpha} 
implies the following (with $\epsilon=1$):     
\begin{equation}\label{beta+}
|\hat{\varphi}(\xi)|\leq 
C|\xi|^{-\epsilon}\quad \text{for some $\epsilon>0.$} 
\end{equation} 
Let $\psi \in L^1(\Bbb R^n)$. We assume that $\psi$ is  related to $\varphi$ 
as in \eqref{nearorigin} with $\Theta \in C^\infty(\Bbb R^n)$ and $A\geq 1$. 
We also consider the conditions:  
\begin{gather}\label{ineq4}
\sup_{j: b^j\leq A}C_\varphi(\psi,j,L)b^{-\epsilon j}<\infty \quad 
\text{for some $\epsilon>0$;} 
\\   
\label{ineq4+} 
D_\varphi(\Theta,A,L)<\infty.   
\end{gather}
\par 
Let $M$ be the Hardy-Littlewood maximal operator 
$$M(f)(x)=\sup_{x\in B}|B|^{-1}\int_B|f(y)|\,dy,   $$ 
where the supremum is taken over all balls $B$ in $\Bbb R^n$ such that 
$x\in B$ and $|B|$ denotes the Lebesgue measure of $B$.    
Let $1<p< \infty$. We recall that 
a weight function $w$ belongs to the weight class $A_p$ of Muckenhoupt on 
$\Bbb R^n$ if 
 $$[w]_{A_p}=  
 \sup_B \left(|B|^{-1} \int_B w(x)\,dx\right)\left(|B|^{-1} \int_B
w(x)^{-1/(p-1)}dx\right)^{p-1} < \infty, $$
where the supremum is taken over all balls $B$ in $\Bbb R^n$ 
Also, we recall that a weight function $w$ is in the class $A_1$ if 
 $M(w)\leq Cw$ almost everywhere. 
The infimum of all such $C$ is denoted by $[w]_{A_1}$.  
\par 
For a weight $w$,  the weighted $L^p$ norm is defined as 
$$\|f\|_{p,w}=\left(\int_{\Bbb R^n} |f(x)|^p w(x)\, dx\right)^{1/p}. $$
We have the following vector value inequality. 
\begin{proposition}\label{T2.3} 
Let $\varphi \in L^1(\Bbb R^n)$. 
We assume that 
$\varphi$ satisfies \eqref{nondegeneracy} with $M=1$.  
 Let $N>0$,   $n/N<p, q<\infty$ and $w \in A_{pN/n}$. 
Suppose that $\varphi$ satisfies \eqref{alpha}, \eqref{beta} and 
\eqref{ineq5}, \eqref{ineq5+} with $L=N$.  
Let $\psi\in L^1(\Bbb R^n)$. 
Suppose that $\psi$ is related to $\varphi$ as in \eqref{nearorigin} with 
 $\Theta\in C^\infty(\Bbb R^n)$, $A\geq 1$ and   
\eqref{ineq4}, \eqref{ineq4+} hold with $L=N$. 
 Then 
$$\left\|\left(\int_0^\infty|f*\psi_t|^q\, \frac{dt}{t}
\right)^{1/q}\right\|_{p,w} 
\leq C\left\|\left(\int_0^\infty|f*\varphi_t|^q\, 
\frac{dt}{t}\right)^{1/q}\right\|_{p,w}   $$  
for $f\in \mathscr S(\Bbb R^n)$ with a positive constant $C$ independent of 
$f$.  
\end{proposition}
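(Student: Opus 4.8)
\emph{Proof idea.} The plan is to use Lemma~\ref{L2.2} to reduce the stated inequality to a vector valued inequality for the Peetre maximal functions $(f*\varphi_s)^{**}_{N,s^{-1}}$, to dominate these by the Hardy--Littlewood maximal operator by means of the Calder\'on type reproducing formula of Lemma~\ref{L2.1} and the Peetre majorization of Lemma~\ref{L2.5}, and finally to invoke the weighted vector valued Fefferman--Stein maximal inequality.

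First I would apply \eqref{ineq2} with $L=N$: since $|f*\psi_t(x)|=|E(\psi,f)(x,t)|$, it bounds $|f*\psi_t(x)|$ by $C\sum_{j:\,b^j\le A}C(\psi,j,N)\,E(\varphi,f)(\cdot,b^jt)^{**}_{N,(b^jt)^{-1}}(x)$ plus $C\,D(\Theta,A,N)\,E(\varphi,f)(\cdot,t)^{**}_{N,t^{-1}}(x)$. Here $\sum_{j:\,b^j\le A}C(\psi,j,N)<\infty$ by \eqref{ineq4} (which forces $C(\psi,j,N)\le Cb^{\epsilon j}$ on $\{b^j\le A\}$, a geometrically summable bound), and $D(\Theta,A,N)<\infty$ by \eqref{ineq4+}. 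Taking the $L^q(dt/t)$ norm in $t$, the triangle inequality in the $\ell^1$-summable index $j$, and the substitution $s=b^jt$ in each summand (which leaves $\int_0^\infty(\,\cdot\,)^q\,ds/s$ unchanged) yield the pointwise bound
\[
\Bigl(\int_0^\infty|f*\psi_t(x)|^q\,\frac{dt}{t}\Bigr)^{1/q}\le C\Bigl(\int_0^\infty\bigl((f*\varphi_s)^{**}_{N,s^{-1}}(x)\bigr)^q\,\frac{ds}{s}\Bigr)^{1/q}.
\]
Thus, after integrating against $w\,dx$, it remains to prove
\[
\Bigl\|\Bigl(\int_0^\infty\bigl((f*\varphi_s)^{**}_{N,s^{-1}}\bigr)^q\,\frac{ds}{s}\Bigr)^{1/q}\Bigr\|_{p,w}\le C\Bigl\|\Bigl(\int_0^\infty|f*\varphi_s|^q\,\frac{ds}{s}\Bigr)^{1/q}\Bigr\|_{p,w};
\]
that is, the Peetre maximal operator must be absorbed.

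To this end I would use Lemma~\ref{L2.1} with $M=1$. Property (4), in dilated form, says $\sum_{m\in\Bbb Z}\varphi_{b^{-m}s}*\eta_{b^{-m}s}=\delta$ for every $s>0$, whence, for $f\in\mathscr S(\Bbb R^n)$,
\[
f*\varphi_s=\sum_{m\in\Bbb Z}(f*\varphi_{b^{-m}s})*K_{m,s},\qquad K_{m,s}=\eta_{b^{-m}s}*\varphi_s,\quad \widehat{K_{m,s}}(\xi)=\hat\varphi(s\xi)\,\hat\eta(b^{-m}s\xi).
\]
Each piece $(f*\varphi_{b^{-m}s})*K_{m,s}$ is band limited, since $\widehat{K_{m,s}}$ is supported in $\{r_1b^m\le s|\xi|\le r_2b^m\}$; moreover $K_{m,s}$ has a physical-space decay of order $N$, and an additional gain by a positive power of $b^{|m|}$ as $|m|\to\infty$, stemming from the size of $\hat\varphi$ near the origin and at infinity, i.e. from $|\hat\varphi(\zeta)|\le C|\zeta|^\epsilon$ (condition \eqref{beta}) and $|\hat\varphi(\zeta)|\le C|\zeta|^{-\epsilon}$ (condition \eqref{beta+}, a consequence of \eqref{alpha}). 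These properties are precisely what the finiteness assumptions \eqref{ineq5} and \eqref{ineq5+} (with $L=N$), together with \eqref{alpha}, quantify. Fixing $r$ with $n/N<r<\min(p,q)$, and close enough to $n/N$ that $w\in A_{p/r}$ — possible because $w\in A_{pN/n}$ and the Muckenhoupt classes are open — I would apply the Peetre majorization of Lemma~\ref{L2.5} to each band limited piece and sum over $m$, organizing the various scales $b^m/s$ against the reference scale $s^{-1}$ by means of the decay built into \eqref{ineq5} and \eqref{ineq5+}, to arrive at a pointwise estimate
\[
(f*\varphi_s)^{**}_{N,s^{-1}}(x)\le C\sum_{m\in\Bbb Z}\gamma_m\,\bigl(M\bigl(|f*\varphi_{b^{-m}s}|^{r}\bigr)(x)\bigr)^{1/r},\qquad \sum_{m\in\Bbb Z}\gamma_m<\infty .
\]
Carrying out this estimate — in particular establishing the absolute summability of $(\gamma_m)$ from \eqref{alpha}, \eqref{beta}, \eqref{ineq5} and \eqref{ineq5+} — is the step that develops further the methods of \cite[Chap.~V]{ST}; this is the main technical difficulty.

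Finally, raising the preceding display to the $q$th power, integrating against $ds/s$, applying the triangle inequality in $m$ and substituting $u=b^{-m}s$ in each term gives, pointwise,
\[
\Bigl(\int_0^\infty\bigl((f*\varphi_s)^{**}_{N,s^{-1}}(x)\bigr)^q\,\frac{ds}{s}\Bigr)^{1/q}\le C\Bigl(\int_0^\infty\bigl(M\bigl(|f*\varphi_u|^{r}\bigr)(x)\bigr)^{q/r}\,\frac{du}{u}\Bigr)^{1/q}.
\]
With $g_u=|f*\varphi_u|^{r}$, the $L^p(w)$ norm of the right-hand side equals $\bigl\|\bigl(\int_0^\infty(Mg_u)^{q/r}\,du/u\bigr)^{r/q}\bigr\|_{p/r,\,w}^{1/r}$; since $p/r>1$, $q/r>1$ and $w\in A_{p/r}$, the weighted vector valued Fefferman--Stein maximal inequality bounds it by $C\bigl\|\bigl(\int_0^\infty g_u^{q/r}\,du/u\bigr)^{r/q}\bigr\|_{p/r,\,w}^{1/r}=C\bigl\|\bigl(\int_0^\infty|f*\varphi_u|^q\,du/u\bigr)^{1/q}\bigr\|_{p,w}$. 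Chaining the three displayed inequalities completes the proof.
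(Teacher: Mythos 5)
Your first reduction (via \eqref{ineq2}, the scale invariance of $dt/t$, and the summability coming from \eqref{ineq4} and \eqref{ineq4+}) and your final step (the weighted Fefferman--Stein inequality, Lemma \ref{L2.6}, applied after raising to the power $r=n/N<\min(p,q)$ with $w\in A_{p/r}$) both match the paper. The gap is the middle step, namely
\[
\int_0^\infty E(\varphi,f)(\cdot,t)^{**}_{N,t^{-1}}(x)^q\,\frac{dt}{t}\leq C\int_0^\infty M(|f*\varphi_t|^r)(x)^{q/r}\,\frac{dt}{t},
\]
which is exactly the paper's Lemma \ref{L2.4} and is the heart of the proposition. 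You propose to obtain it from the Calder\'on-type decomposition $f*\varphi_s=\sum_m(f*\varphi_{b^{-m}s})*\eta_{b^{-m}s}*\varphi_s$ together with a band-limited Peetre estimate on each piece, and you state the resulting pointwise bound with summable coefficients $\gamma_m$ as an assertion, explicitly deferring its verification. That verification is not routine under the stated hypotheses: the $\gamma_m$ you need involve weighted $L^1$ bounds on the kernels $\mathscr F^{-1}[\hat\varphi(b^{-m}\cdot)\hat\eta]$, i.e.\ quantities of the type $C_\varphi(\varphi,m,N)$, combined with the loss $b^{-mN}$ incurred in comparing the Peetre maximal function at the natural scale of the $m$-th piece with the one at scale $s^{-1}$; but \eqref{ineq5} and \eqref{ineq5+} are phrased in terms of $\nabla\varphi$, not of $\varphi$ itself. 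Passing from one to the other (say by writing $\hat\varphi(\zeta)=(2\pi i|\zeta|^2)^{-1}\sum_k\zeta_k\mathscr F(\partial_k\varphi)(\zeta)$ on the annular support of $\hat\eta$) is plausible but is precisely the work you have not done, and it is exactly where the extra factor $b^{-jL}$ built into \eqref{ineq5} must be spent.

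The paper closes this step by a different and shorter route, which also explains why the hypotheses are stated the way they are: apply Peetre's Lemma \ref{L2.5} to $F=f*\varphi_t$, giving
\[
E(\varphi,f)(\cdot,t)^{**}_{N,t^{-1}}(x)\leq C\delta^{-N}M(|f*\varphi_t|^r)(x)^{1/r}+C\delta\,|f*(\nabla\varphi)_t|^{**}_{N,t^{-1}}(x),
\]
then observe that $\partial_k\varphi$ is itself related to $\varphi$ as in \eqref{nearorigin} with $\Theta(\xi)=2\pi i\xi_k$ and $A=1$, so that \eqref{ineq3} of Lemma \ref{L2.2} controls $|f*(\nabla\varphi)_t|^{**}_{N,t^{-1}}$ by $\sum_{j\geq0}C_\varphi(\nabla\varphi,j,N)b^{-jN}E(\varphi,f)(\cdot,b^jt)^{**}_{N,(b^jt)^{-1}}+D_\varphi(N)E(\varphi,f)(\cdot,t)^{**}_{N,t^{-1}}$. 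Integrating in $dt/t$ turns the right-hand side into a multiple of the left-hand side of the target inequality, and one absorbs it by choosing $\delta$ small, after checking that $\int_0^\infty E(\varphi,f)(\cdot,t)^{**q}_{N,t^{-1}}\,dt/t$ is finite a priori for $f\in\mathscr S(\Bbb R^n)$ (this is where \eqref{beta} and \eqref{beta+} enter). In short: your architecture is right and the outer steps are correct, but the decisive estimate is asserted rather than proved, and under the hypotheses as written the absorption argument of Lemma \ref{L2.4} is the intended way to obtain it.
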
  
\par 
We need the next result to show  Proposition \ref{T2.3}.  
\begin{lemma}\label{L2.4}  
Suppose that $0<q<\infty$, $N>0$ and 
 that $\varphi\in L^1(\Bbb R^n)$ satisfies \eqref{nondegeneracy}, 
\eqref{alpha}, \eqref{beta} and \eqref{ineq5}, \eqref{ineq5+} with $L=N$. Then 
\begin{equation*}\label{ineq10}
\int_0^\infty E(\varphi,f)(\cdot,t)^{**}_{N, t^{-1}}(x)^q \, \frac{dt}{t} \leq 
C\int_0^\infty M(|f*\varphi_t|^r)(x)^{q/r}\, \frac{dt}{t}, \quad r=n/N.   
\end{equation*}  
\end{lemma}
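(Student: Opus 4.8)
The plan is to deduce the estimate from a single pointwise‑in‑$t$ inequality and then integrate. Write $\Psi(x,t):=E(\varphi,f)(\cdot,t)^{**}_{N,t^{-1}}(x)$ and $r=n/N$. First I would establish that, for some $\epsilon>0$, for a.e.\ $x$ and every $t>0$,
\[
\Psi(x,t)\le C\sum_{j\ge 0}b^{\epsilon j}\,M\bigl(|f*\varphi_{b^{j}t}|^{r}\bigr)(x)^{1/r}.
\]
Granting this, one raises both sides to the power $q$, integrates $dt/t$ over $(0,\infty)$, invokes the dilation invariance $\int_0^\infty M(|f*\varphi_{b^{j}t}|^{r})(x)^{q/r}\,dt/t=\int_0^\infty M(|f*\varphi_{t}|^{r})(x)^{q/r}\,dt/t$, and sums up using the triangle inequality in $L^{q}(dt/t)$ when $q\ge1$, or the elementary inequality $(\sum_j a_j)^{q}\le\sum_j a_j^{q}$ when $q<1$; since $\sum_{j\ge0}b^{\epsilon j}<\infty$ this yields exactly the conclusion of the lemma.

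For the pointwise inequality I would split $f*\varphi_t$ into band‑limited pieces by means of the reproducing formula of Lemma~\ref{L2.1} (with $M=1$) and the identity together with \eqref{ineq1} derived just before Lemma~\ref{L2.2}, applied with $\psi=\varphi$ (so $\Theta\in C^\infty(\Bbb R^n)$ equal to $1$ near the origin) and, for definiteness, $A=1$: this writes $f*\varphi_t=\sum_{j\ge0}G_j+R$, where $R=(f*\varphi_t)*\beta_t$ has Fourier transform supported in $\{|\xi|\le r_2/t\}$ and each $G_j=(f*\varphi_{b^{j}t})*(\alpha^{(b^{j})})_{b^{j}t}$, with $\alpha^{(b^{j})}=\varphi_{b^{-j}}*\eta$, has Fourier transform supported in the annulus $\{r_1 b^{-j}/t<|\xi|<r_2 b^{-j}/t\}$. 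Each summand is therefore band‑limited, so Lemma~\ref{L2.5} (Peetre's inequality, from \cite{P}) applies to it: a function $F$ with spectrum in a ball of radius $\rho$ satisfies $\sup_z|F(x-z)|(1+\rho|z|)^{-N}\le C\,M(|F|^{r})(x)^{1/r}$. Replacing the Peetre weight $(1+\rho|z|)^{-N}$ by the weight $(1+|z|/t)^{-N}$ that occurs in $\Psi$ costs a factor at most $(\rho t)^{N}\le Cb^{-jN}$ for $G_j$, and at most a constant for $R$; and writing each $G_j$ as $(f*\varphi_{b^{j}t})*(\text{kernel})$, respectively $R$ as $(f*\varphi_t)*(\text{kernel})$, one passes from $M(|G_j|^{r})^{1/r}$ (resp.\ $M(|R|^{r})^{1/r}$) to $M(|f*\varphi_{b^{j}t}|^{r})^{1/r}$ (resp.\ $M(|f*\varphi_t|^{r})^{1/r}$) at the cost of the weighted $L^1$‑norm of the corresponding kernel.

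The role of the hypotheses is to make these weighted $L^1$‑norms summable. For $G_j$ with $j$ large the decisive gain comes from an integration by parts: since $\hat\eta$ vanishes near the origin, on $\supp\hat\eta$ one may invert $\mathscr F(\partial_k\varphi)(\zeta)=2\pi i\zeta_k\mathscr F(\varphi)(\zeta)$, and a smooth partition of unity on the annulus subordinate to $\{|\xi_k|\ge c|\xi|\}$ gives $\alpha^{(b^{j})}=b^{j}\sum_k(\partial_k\varphi)_{b^{-j}}*\lambda_k$ with fixed functions $\lambda_k$ whose spectra lie in the same annulus; hence the weighted $L^1$‑norm of the kernel of $G_j$ is $\le Cb^{j}C_\varphi(\nabla\varphi,j,N)$ (replacing $\eta$ by the $\lambda_k$ only affects constants), which by \eqref{ineq5} is $\le Cb^{j(N+1+\epsilon)}$; together with the factor $Cb^{-jN}$ from the weight this produces the coefficient $Cb^{j(1+\epsilon)}\le Cb^{\epsilon j}$ of $M(|f*\varphi_{b^{j}t}|^{r})^{1/r}$. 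Condition \eqref{beta} ($|\mathscr F(\varphi)(\xi)|\le C|\xi|^{\epsilon}$ near $0$) supplies the analogous smallness for the lowest frequencies, and \eqref{ineq5+} takes care of the band‑limited residual $R$ (contributing at scale $t$, i.e.\ the $j=0$ term): running the same decomposition for the derivatives $f*(\partial_k\varphi)_t$, for which $\psi=\partial_k\varphi$ is related to $\varphi$ through \eqref{nearorigin} with $\Theta=\Xi_k$ and $J=A=1$, the residual kernel is precisely $\partial_k\zeta_1$, so $D_\varphi(N)<\infty$ gives $M(|R|^{r})(x)^{1/r}\le CM(|f*\varphi_t|^{r})(x)^{1/r}$. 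Conditions \eqref{alpha} and the ensuing \eqref{beta+} are used throughout to guarantee the pointwise kernel bounds, i.e.\ the finiteness of the $C_\varphi$‑ and $D_\varphi$‑quantities (cf.\ Lemma~\ref{L2.8}).

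I expect the main difficulty to lie in the last two steps: reconciling the weighted $L^1$‑norms of the kernels produced after the integration by parts and the change of auxiliary function with the quantities $C_\varphi(\nabla\varphi,j,N)$ and $D_\varphi(N)$ assumed finite, and establishing the estimate $M(|g*K|^{r})(x)^{1/r}\le C\|K\|\,M(|g|^{r})(x)^{1/r}$ for kernels $K$ controlled only in a weighted $L^1$ sense rather than dominated pointwise by a radially decreasing function. Here the band‑limitedness of the pieces $G_j$ and of $R$ is essential: via a Plancherel--Pólya type argument it permits restricting the maximal function to balls of radius at least of order $b^{j}t$, on which scale the weighted $L^1$‑norm of $K$ is the natural controlling quantity.
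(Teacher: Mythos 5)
Your plan takes a genuinely different route from the paper's, and it breaks down precisely at the step you yourself flag as the main difficulty. The paper does not attempt a pointwise-in-$t$ inequality of the form $\Psi(x,t)\le C\sum_{j}b^{\epsilon j}M(|f*\varphi_{b^jt}|^r)(x)^{1/r}$, and it never needs to pass from $M(|g*K|^r)^{1/r}$ to $M(|g|^r)^{1/r}$. Instead it applies Lemma \ref{L2.5} once, to $F=f*\varphi_t$ itself (which is $C^1$ by \eqref{alpha}, so the gradient term is available), getting
$$E(\varphi,f)(\cdot,t)^{**}_{N,t^{-1}}(x)\le C\delta^{-N}M(|f*\varphi_t|^r)(x)^{1/r}+C\delta\,|f*(\nabla\varphi)_t|^{**}_{N,t^{-1}}(x);$$
it then controls the gradient term by \eqref{ineq3} of Lemma \ref{L2.2} with $\psi=\partial_k\varphi$, $\Theta=\Xi_k$, $A=1$ --- staying entirely at the level of Peetre maximal functions, where weighted $L^1$ control of the kernels suffices for every $r$ --- and finally integrates in $t$, uses the dilation invariance of $dt/t$ to identify the gradient contribution as $C\delta^q(\cdots)\int_0^\infty E(\varphi,f)(\cdot,t)^{**}_{N,t^{-1}}(x)^q\,dt/t$, and absorbs it into the left-hand side by choosing $\delta$ small. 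The absorption requires the a priori finiteness of that integral for $f\in\mathscr S(\Bbb R^n)$, which is exactly where \eqref{beta} and \eqref{beta+} enter; your account assigns \eqref{beta} a different role and never addresses finiteness, which indicates the self-improving structure of the argument has been missed.

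The concrete gap is the conversion $M(|g*K|^r)(x)^{1/r}\le C\|K\|\,M(|g|^r)(x)^{1/r}$ with $\|K\|$ only a weighted $L^1$ norm. Here $r=n/N$, which is $\le 1$ in the intended applications, and for $r<1$ this inequality is false for general integrable kernels: convolution spreads mass, so $|g*K|^r$ can have much heavier tails than $M(|g|^r)$ when $g$ is concentrated, and the hypotheses give no pointwise domination of the kernels $(\alpha^{(b^j)})_{b^jt}$ by radially decreasing integrable functions --- only finiteness of the integrals $C_\varphi(\cdot,j,N)$. Your proposed rescue via band-limitedness is not carried out, and note that band-limitedness of $G_j$ does not make $f*\varphi_{b^jt}$ band-limited, so a Plancherel--P\'olya inequality is unavailable on the side where you need it. Two further mismatches with the stated hypotheses: your residual $R=(f*\varphi_t)*\beta_t$ comes from decomposing $\varphi$ against itself with $\Theta\equiv1$, so its kernel bound is $D_\varphi(1,1,N)$, which is not assumed (\eqref{ineq5+} only gives $D_\varphi(\Xi_k,1,N)<\infty$, and Lemma \ref{L2.8}(2) would need smoothness of $\hat\varphi$ not hypothesized in Lemma \ref{L2.4}); and after your integration by parts the kernels involve auxiliary functions $\lambda_k$ in place of the specific $\eta$ of Lemma \ref{L2.1}, whereas \eqref{ineq5} is a statement about $\eta$ only. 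Your exponent bookkeeping ($b^{-jN}$ from the weight against $b^{j(N+1+\epsilon)}$ from \eqref{ineq5}) is consistent, but without the absorption mechanism the proof does not close.
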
 
 We need the following in proving Lemma \ref{L2.4}. 
\begin{lemma}[see \cite{P}]\label{L2.5}  
If $F\in C^1(\Bbb R^n)$ and $R>0$,  $r>0$,  then  
$$F^{**}_{N,R}(x)\leq C\delta^{-N} M(|F|^r)(x)^{1/r} +C\delta R^{-1}
|\nabla F|^{**}_{N,R}(x)  $$  
for all $\delta \in (0,1]$, where $N=n/r$ and the constant $C$ is 
 independent of $\delta$ and $R$. 
\end{lemma}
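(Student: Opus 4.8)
The plan is to deduce the bound from the mean value inequality combined with a change of base point in the Hardy--Littlewood maximal function. Fix $x\in\Bbb R^n$ and an arbitrary $y\in\Bbb R^n$; set $z=x-y$ and $\rho=\delta/R$. For every $w\in B(z,\rho)$, integrating $\nabla F$ along the segment $[z,w]\subset\overline{B(z,\rho)}$ gives $|F(z)|\le |F(w)|+\rho\,\sup_{B(z,\rho)}|\nabla F|$. Since $\delta\le1$, for $w\in B(z,\rho)$ we have $|x-w|\le|y|+\rho$, hence $1+R|x-w|\le 1+R|y|+\delta\le 2(1+R|y|)$, so that $\sup_{B(z,\rho)}|\nabla F|\le 2^{N}(1+R|y|)^{N}|\nabla F|^{**}_{N,R}(x)$. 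Moreover $B(z,\rho)\subset B(x,|y|+\rho)$, and since $x\in B(x,|y|+\rho)$ we have, for any nonnegative $g$,
\[
|B(z,\rho)|^{-1}\int_{B(z,\rho)}g\le\big((|y|+\rho)/\rho\big)^{n}M(g)(x)=(1+R|y|/\delta)^{n}M(g)(x)\le\delta^{-n}(1+R|y|)^{n}M(g)(x).
\]

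First I would treat the case $0<r\le1$. Raising $|F(z)|\le|F(w)|+\rho\sup_{B(z,\rho)}|\nabla F|$ to the power $r$, using the subadditivity of $t\mapsto t^{r}$, and averaging in $w$ over $B(z,\rho)$ gives
\[
|F(z)|^{r}\le |B(z,\rho)|^{-1}\int_{B(z,\rho)}|F(w)|^{r}\,dw+\rho^{r}\Big(\sup_{B(z,\rho)}|\nabla F|\Big)^{r}.
\]
Inserting the two estimates above with $g=|F|^{r}$ and using $Nr=n$, the right-hand side is at most $(1+R|y|)^{n}\big[\delta^{-n}M(|F|^{r})(x)+2^{n}\delta^{r}R^{-r}(|\nabla F|^{**}_{N,R}(x))^{r}\big]$; dividing by $(1+R|y|)^{Nr}$ and taking the supremum over $y$ yields $(F^{**}_{N,R}(x))^{r}\le C\delta^{-n}M(|F|^{r})(x)+C\delta^{r}R^{-r}(|\nabla F|^{**}_{N,R}(x))^{r}$. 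Taking the $(1/r)$-th root and using $(a+b)^{1/r}\le 2^{1/r-1}(a^{1/r}+b^{1/r})$ gives the assertion, since $n/r=N$ and the constant depends only on $n$ and $r$, not on $\delta$ or $R$.

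For $r>1$ the argument is shorter: averaging $|F(z)|\le|F(w)|+\rho\sup_{B(z,\rho)}|\nabla F|$ directly in $w$ over $B(z,\rho)$ and applying H\"older's inequality to the average of $|F|$ gives
\[
|F(z)|\le\Big(|B(z,\rho)|^{-1}\int_{B(z,\rho)}|F(w)|^{r}\,dw\Big)^{1/r}+\rho\,\sup_{B(z,\rho)}|\nabla F|;
\]
the same two estimates, the identity $Nr=n$, division by $(1+R|y|)^{N}$ and a supremum over $y$ then finish the proof.

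The computation is elementary, and there is no serious obstacle; the only points that require care are the bookkeeping that makes every factor $(1+R|y|)$ cancel — which is precisely where the relation $N=n/r$ is used — and the passage from an average over the ball $B(z,\rho)$ centered at $z=x-y$ to $M(|F|^{r})$ evaluated at $x$, for which the inclusion $B(z,\rho)\subset B(x,|y|+\rho)$ together with $\delta\le1$ are needed. Note that no finiteness hypothesis on $F^{**}_{N,R}(x)$ is required, since the estimate is produced directly rather than by absorbing a term into the left-hand side.
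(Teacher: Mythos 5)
Your proof is correct and follows essentially the same route as the paper's: the mean value inequality on a ball of radius $\delta/R$ centered at $x-y$, enlargement of that ball to one centered at $x$ to bring in $M(|F|^r)(x)$, the bound $\sup|\nabla F|\le 2^N(1+R|y|)^N|\nabla F|^{**}_{N,R}(x)$, and the cancellation of the $(1+R|y|)$ factors via $N=n/r$. The only cosmetic difference is that the paper treats all $r>0$ at once by applying the quasi-triangle inequality (with constant $C_r$) to the $L^r$-average, whereas you split into the cases $0<r\le 1$ and $r>1$; the substance is identical.
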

\begin{proof}[Proof of Lemma $\ref{L2.4}$.]   
By Lemma \ref{L2.5} we have   
\begin{equation}\label{ineq7} 
E(\varphi,f)(\cdot,t)^{**}_{N, t^{-1}}(x)\leq C\delta^{-N} 
M(|f*\varphi_t|^r)(x)^{1/r} +
C\delta |f*(\nabla\varphi)_t|^{**}_{N,t^{-1}}(x),  
\end{equation}  
where $f*(\nabla\varphi)_t=(f*(\partial_{1}\varphi)_t, \dots , 
f*(\partial_{n}\varphi)_t)$, $r=n/N$. 
We apply \eqref{ineq3} of Lemma \ref{L2.2} with $\psi=\partial_{k}\varphi$,  
$\Theta(\xi)=2\pi i\xi_k$, $A=1$ in \eqref{nearorigin}. Then 
\begin{multline*}  
|f*(\nabla\varphi)_t|^{**}_{N,t^{-1}}(x)
\\ 
\leq C\sum_{j\geq 0}
C_\varphi(\nabla\varphi, j,N)b^{-jN} E(\varphi,f)(\cdot,b^jt)^{**}
_{N, (b^jt)^{-1}}(x) 
 +CD_\varphi(N) E(\varphi,f)(\cdot,t)^{**}_{N, t^{-1}}(x). 
\end{multline*}   
Using this in \eqref{ineq7} and applying H\"{o}lder's inequality when $q>1$,  
we see that   
\begin{multline} \label{ineq8} 
E(\varphi,f)(\cdot,t)^{**}_{N, t^{-1}}(x)^q\leq C\delta^{-Nq} 
M(|f*\varphi_t|^r)(x)^{q/r} 
\\ 
+  C_q\delta^q\sum_{j\geq 0}
C_\varphi(\nabla\varphi, j,N)^qb^{-jNq}b^{-\tau c_qj} 
 E(\varphi,f)(\cdot,b^jt)^{**}_{N, (b^jt)^{-1}}(x)^q        
\\ 
+C\delta^q D_\varphi(N)^q E(\varphi,f)(\cdot,t)^{**}_{N, t^{-1}}(x)^q. 
\end{multline} 
where $\tau>0$,  $c_q=1$ if $q>1$ and $c_q=0$ if $0<q\leq 1$.  
\par 
If we integrate both sides of the inequality \eqref{ineq8} over $(0,\infty)$ 
with respect to the measure $dt/t$ and if we apply termwise integration on the 
right hand side, then we have  
\begin{multline}\label{ineq9}  
\int_0^\infty E(\varphi,f)(\cdot,t)^{**}_{N, t^{-1}}(x)^q \, \frac{dt}{t} 
\leq 
C\delta^{-Nq} \int_0^\infty M(|f*\varphi_t|^r)(x)^{q/r}\, \frac{dt}{t} 
\\ 
+ C_q\delta^q\left[\sum_{j\geq 0}
C_\varphi(\nabla\varphi, j,N)^qb^{-jNq}b^{-\tau c_qj}
+ D_\varphi(N)^q \right]\int_0^\infty 
E(\varphi,f)(\cdot,t)^{**}_{N, t^{-1}}(x)^q \, \frac{dt}{t}.                   
\end{multline}  
The condition \eqref{ineq5} with $L=N$ implies that the sum in $j$ on the right hand side of \eqref{ineq9} is finite if $\tau$ is small enough.    
 We can see that 
the last integral on the right hand side of \eqref{ineq9} is  finite 
for $f\in \mathscr S(\Bbb R^n)$ by \eqref{beta} and \eqref{beta+}. 
Further, we have \eqref{ineq5+} for $L=N$.  Altogether, 
 it follows that the second term on the right hand side of \eqref{ineq9} 
 is finite. 
Thus, we can get the conclusion if we choose $\delta$ sufficiently small. 
\end{proof} 

\begin{proof}[Proof of Proposition $\ref{T2.3}$] By \eqref{ineq2}  we have 
\begin{multline*} 
|E(\psi,f)(x,t)|^q 
\leq C_q\sum_{j: b^j\leq A} 
C(\psi,j,N)^q b^{-\tau c_qj} 
E(\varphi,f)(\cdot,b^jt)^{**}_{N, (b^jt)^{-1}}(x)^q  
\\   
+CD(\Theta,A,N)^q E(\varphi,f)(\cdot,t)^{**}_{N, t^{-1}}(x)^q,   
\end{multline*}  
where $\tau>0$ and $c_q$ is as in \eqref{ineq8}. 
Integrating with the measure $dt/t$ over $(0,\infty)$, we have 
\begin{multline}\label{ineq6}  
\int_0^\infty |E(\psi,f)(x,t)|^q\, \frac{dt}{t} 
\\ 
\leq C_q\left[\sum_{j: b^j\leq A}
C(\psi,j,N)^q b^{-\tau c_qj} + D(\Theta,A,N)^q\right]  \int_0^\infty 
E(\varphi,f)(\cdot,t)^{**}_{N, t^{-1}}(x)^q \, \frac{dt}{t}.  
\end{multline}  
 The sum in $j$ on the right hand side of 
\eqref{ineq6}  is finite by \eqref{ineq4} with $L=N$ if $\tau$ is small enough; also we have assumed
 $D(\Theta,A,N)<\infty$ (\eqref{ineq4+} with $L=N$).     
Let $r=n/N<q, p$ and $w\in A_{pN/n}$.  
By \eqref{ineq6} and Lemma \ref{L2.4} we see that 
\begin{align}\label{ineq11} 
\left(\int_{\Bbb R^n} \left(\int_0^\infty \right.\right. & \left.\left.
|E(\psi,f)(x,t)|^{q}\, \frac{dt}{t}
\right)^{p/q} w(x)\, dx\right)^{1/p} 
\\
&\leq C\left\|\left(\int_0^\infty M(|f*\varphi_t|^r)(x)^{q/r} 
\, \frac{dt}{t}\right)^{1/q}\right\|_{p,w}    \notag 
\\ 
&= C\left\|\left(\int_0^\infty M(|f*\varphi_t|^r)(x)^{q/r} 
\, \frac{dt}{t}\right)^{r/q}\right\|_{p/r,w}^{1/r}            \notag 
\\
&\leq C\left(\int_{\Bbb R^n} \left(\int_0^\infty |E(\varphi,f)(x,t)|^{q}
\, \frac{dt}{t}\right)^{p/q} w(x)\, dx\right)^{1/p},   \notag 
\end{align}  
where the last inequality follows form the following lemma, which is 
a version of the vector valued inequality for the Hardy-Littlewood maximal 
functions of Fefferman-Stein \cite{FeS} (see \cite{RRT} for a proof of the
 $\ell^\mu$-valued case, which may be available also in the present 
 situation).     

\begin{lemma}\label{L2.6} 
 Suppose that $1<\mu, \nu <\infty$ and $w\in A_\nu$. Then 
for appropriate functions $E(x,t)$ on $\Bbb R^n\times (0, \infty)$ we have 
$$\left\|\left(\int_0^\infty M(E^t)(x)^{\mu} 
\, \frac{dt}{t}\right)^{1/\mu}\right\|_{\nu,w} 
\leq C \left(\int_{\Bbb R^n} \left(\int_0^\infty |E(x,t) |^{\mu}
\, \frac{dt}{t}\right)^{\nu/\mu} w(x)\, dx\right)^{1/\nu},    $$
where $E^t(x)=E(x,t)$.   
\end{lemma}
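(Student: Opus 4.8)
The plan is to derive the lemma from the $\ell^\mu$-valued Fefferman--Stein maximal inequality, which we take as known in its weighted form: for $1<\mu,\nu<\infty$, $w\in A_\nu$ and any sequence $(g_k)_{k\ge 1}$ of measurable functions on $\Bbb R^n$,
$$
\left\|\Bigl(\sum_{k}M(g_k)^\mu\Bigr)^{1/\mu}\right\|_{\nu,w}
\le C\left\|\Bigl(\sum_{k}|g_k|^\mu\Bigr)^{1/\mu}\right\|_{\nu,w},
$$
with $C=C(n,\mu,\nu,[w]_{A_\nu})$ (this is \cite{FeS}; see also \cite{RRT}). The assertion is the analogue in which counting measure on $\Bbb Z$ is replaced by the measure $dt/t$ on $(0,\infty)$ and the sequence $(g_k)$ by the family $(E(\cdot,t))_{t>0}$, so the whole content is the passage from a discrete to a continuous index set. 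Throughout one may assume that $E$ is jointly measurable on $\Bbb R^n\times(0,\infty)$ and that the right-hand side is finite, since otherwise there is nothing to prove.

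First I would note that the estimate is exact for functions that are step functions in $t$: if $E(x,t)=\sum_{j=1}^{m}E_j(x)\chi_{A_j}(t)$ with the $A_j$ pairwise disjoint of finite $dt/t$-measure $\Delta_j$, then by the positive homogeneity of $M$,
$$
\int_0^\infty M(E^t)(x)^\mu\,\frac{dt}{t}=\sum_j\Delta_j M(E_j)(x)^\mu=\sum_j M\bigl(\Delta_j^{1/\mu}E_j\bigr)(x)^\mu,\qquad
\int_0^\infty|E^t(x)|^\mu\,\frac{dt}{t}=\sum_j\bigl|\Delta_j^{1/\mu}E_j(x)\bigr|^\mu,
$$
so the displayed $\ell^\mu$-inequality applied to $g_j=\Delta_j^{1/\mu}E_j$ yields exactly the bound of the lemma, with the same constant. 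To pass to a general $E$ --- and it does no harm to assume $E\ge 0$, since both sides see only $|E|$ --- I would approximate $E$ from below by step functions in $t$. In the case relevant to \eqref{ineq11}, where $E(x,t)=|f*\varphi_t(x)|^r$ with $f\in\mathscr S(\Bbb R^n)$ is jointly continuous, one may simply take a nested sequence of partitions of $(0,\infty)$ with mesh tending to $0$ and set $E_N(x,t)=\inf_{s\in I}E(x,s)$ on each piece $I$ of the $N$-th partition; then each $E_N$ is a step function in $t$, $E_N\uparrow E$ pointwise, $M(E_N^t)\uparrow M(E^t)$ pointwise, and repeated use of the monotone convergence theorem promotes the inequality from $E_N$ to $E$. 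For a general measurable $E$ one argues similarly, first approximating $E$ in the mixed-norm space $X=L^\nu(w\,dx;L^\mu((0,\infty),dt/t))$ naturally associated with the right-hand side of the lemma: step functions in $t$ are dense in $X$, and the sublinearity of $M$ gives the pointwise bound $|\mathcal M E(x)-\mathcal M E'(x)|\le\mathcal M(|E-E'|)(x)$, where $\mathcal M E(x)=(\int_0^\infty M(E^t)(x)^\mu\,dt/t)^{1/\mu}$, so that the estimate on the dense subspace extends by continuity to all of $X$. Alternatively, the proof of the $\ell^\mu$-valued inequality in \cite{RRT} uses nothing about the index set beyond $\sigma$-finiteness and applies verbatim with $L^\mu((0,\infty),dt/t)$ in place of $\ell^\mu$; this is presumably what the parenthetical remark after the statement intends.

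Finally, I note that a self-contained route is available as well. When $\mu=\nu$ the estimate is immediate from Tonelli's theorem and the scalar weighted maximal bound of Muckenhoupt, $\|Mg\|_{\nu,w}\le C\|g\|_{\nu,w}$ for $w\in A_\nu$, since
$$
\int_{\Bbb R^n}\int_0^\infty M(E^t)(x)^\nu\,\frac{dt}{t}\,w(x)\,dx
=\int_0^\infty\|M(E^t)\|_{\nu,w}^\nu\,\frac{dt}{t}
\le C\int_0^\infty\|E^t\|_{\nu,w}^\nu\,\frac{dt}{t};
$$
the general case $\mu\ne\nu$ then follows from the vector-valued form of the Rubio de Francia extrapolation theorem, whose proof goes through with $L^\mu((0,\infty),dt/t)$ in place of $\ell^\mu$. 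In any of these approaches the only genuine point is the discrete-to-continuous (respectively, the extrapolation) step; everything else is the classical weighted theory of the Hardy--Littlewood maximal operator. I expect the most delicate part to be making the limiting argument above fully rigorous --- in particular the density of step functions in $X$ and the measurability bookkeeping needed to set $X$ up --- although it is entirely routine.
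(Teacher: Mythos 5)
Your argument is correct, but be aware that the paper does not actually prove Lemma \ref{L2.6}: it is quoted as a known version of the Fefferman--Stein vector valued maximal inequality, with only the parenthetical remark that the proof of the $\ell^\mu$-valued case in \cite{RRT} ``may be available also in the present situation''. What you supply is therefore a genuine completion rather than a parallel proof. Your main route --- reducing the continuous parameter to the discrete case by approximating $E$ from below by functions that are step functions in $t$, using the positive homogeneity of $M$ to absorb the masses $\Delta_j=\int_{A_j}dt/t$ into the functions, and passing to the limit by monotone convergence --- is a clean implementation of exactly the reduction the paper gestures at, and it yields the lemma with the constant of the weighted $\ell^\mu$-valued inequality (for $w\in A_\nu$ this is not literally in \cite{FeS}, which is unweighted, but is classical and follows from \cite{RRT} or from extrapolation). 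One caveat: your density-plus-continuity extension to arbitrary measurable $E$ has the usual circularity for sublinear operators --- the pointwise bound $|\mathcal ME(x)-\mathcal ME'(x)|\le\mathcal M(|E-E'|)(x)$ only transfers the norm estimate along the approximating sequence if you already control $\mathcal M(|E-E_N|)$, and $|E-E_N|$ is not a step function, so ``extends by continuity'' as written is incomplete. This does not affect the paper: the lemma is stated only for ``appropriate'' $E$, and in the application \eqref{ineq11} one takes $E(x,t)=|f*\varphi_t(x)|^r$ with $f\in\mathscr S(\Bbb R^n)$, which is jointly continuous, so your monotone step-function argument (or the observation that the proof in \cite{RRT} works verbatim with $L^\mu((0,\infty),dt/t)$ in place of $\ell^\mu$) already covers everything that is needed.
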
  
This completes the proof of Proposition \ref{T2.3}.  
\end{proof} 
We have an analogous result for general $\varphi=
(\varphi^{(1)}, \dots, \varphi^{(M)})$, although Proposition \ref{T2.3}
 is stated only for the case $M=1$. 
\par 
It is obvious that $Q$, $\hat{Q}(\xi)=-2\pi|\xi|e^{-2\pi|\xi|}$, 
 satisfies all the requirements on $\varphi$ in Lemma \ref{L2.4} for all 
 $N>0$.  
To state results with more directly verifiable assumptions on $\varphi$ and 
$\psi$, we introduce a class of functions. 
\begin{definition} 
Let $\psi\in L^1(\Bbb R^n)$.  Let $l$ be a  non-negative integer and $\tau$ 
a non-negative real number. 
We say $\psi \in B^l_\tau$ if $\hat{\psi}\in C^{l}(\Bbb R^n\setminus\{0\})$ 
and  
$$|\partial_\xi^\gamma \hat{\psi}(\xi)|\leq 
C_\gamma|\xi|^{-\tau-|\gamma|}  
\quad \text{outside a neighborhood of the origin}$$ 
for every $\gamma$ satisfying $|\gamma|\leq l$ with a constant $C_\gamma$, 
where 
$\gamma=(\gamma_1, \dots, \gamma_n)$ is a multi-index, 
$\gamma_j\in \Bbb Z$, $\gamma_j\geq 0$, $|\gamma|=\gamma_1+\dots+\gamma_n$  
and $\partial_\xi^\gamma=
\partial_{\xi_1}^{\gamma_1}\dots \partial_{\xi_n}^{\gamma_n}$.  
\end{definition}  
Clearly, $Q\in B^l_\tau$ for any $l, \tau$.  This is also the case for 
$\psi\in \mathscr S(\Bbb R^n)$.  

\begin{lemma}\label{L2.8} 
Suppose that $\varphi\in L^1(\Bbb R^n)$ and $\varphi$ satisfies 
the condition \eqref{nondegeneracy}. 
Let $\tau\geq 0$, $J>0$ and let $L$ be a non-negative integer. 
\begin{enumerate} 
\item[$(1)$]  
Suppose that 
$\psi\in B^{L+[n/2]+1}_\tau$ and  $\hat{\varphi}
 \in C^{L+[n/2]+1}(\Bbb R^n\setminus\{0\})$, where $[a]$ denotes the largest 
integer not exceeding $a$.  
Then we have 
\begin{equation*}
\sup_{j: b^j\leq J}C_\varphi(\psi,j,L)b^{-j\tau}<\infty,    
\end{equation*} 
where $C_\varphi(\psi,j,L)=C(\psi,j,L)$ is as in \eqref{c}.    
\item[$(2)$]  
Suppose that $\Theta \in C^\infty(\Bbb R^n)$   
and  $\hat{\varphi} \in C^{L+[n/2]+1}(\Bbb R^n\setminus\{0\})$.  
Then 
\begin{equation*}
 D_\varphi(\Theta,J, L)< \infty,       
\end{equation*} 
where  $D_\varphi(\Theta,J, L)=D(\Theta,J, L)$ is as in \eqref{d2}. 
\item[$(3)$] 
Let $\psi^{(k)}\in L^1(\Bbb R^n)$ and 
$\mathscr F(\psi^{(k)})(\xi)=2\pi i\xi_k \hat{\varphi}(\xi)$, $1\leq k\leq n$. 
 If $\varphi\in B^{L+[n/2]+1}_{L+1+\tau}$,  then we have 
\begin{equation*}
\sup_{j: b^j\leq J}C_\varphi(\psi^{(k)},j,L)b^{-jL-j\tau}<\infty, \quad 
D_\varphi(\Xi_k,1, L)<\infty    
\end{equation*} 
for each $k$, where $\Xi_k(\xi)=2\pi i\xi_k$ as above.   
\end{enumerate}
\end{lemma}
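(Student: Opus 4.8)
\textit{Proof sketch.} The plan is to derive all three parts from one auxiliary estimate. Write $\mathscr F^{-1}(g)(x)=\int g(\xi)e^{2\pi i\langle x,\xi\rangle}\,d\xi$. The estimate is: \emph{if $g\in C^{m}(\Bbb R^n)$ has compact support and $m\geq L+[n/2]+1$, then $\int_{\Bbb R^n}(1+|x|)^L\,|\mathscr F^{-1}(g)(x)|\,dx\leq C\sum_{|\gamma|\leq m}\|\partial_\xi^\gamma g\|_{L^2}$}, with $C$ depending only on $n$, $L$ and a bound for $\supp g$. This suffices: by \eqref{czero}, \eqref{c}, \eqref{d}, \eqref{d2} one has $C_\varphi(\psi,j,L)=\int(1+|x|)^L|\mathscr F^{-1}(g_j)(x)|\,dx$ with $g_j(\xi)=\hat\psi(b^{-j}\xi)\hat\eta(\xi)$, and $D_\varphi(\Theta,J,L)=\int(1+|x|)^L|\mathscr F^{-1}(\hat\zeta_J\Theta)(x)|\,dx$; in each case the transformed function has compact support, so only its smoothness --- and, for part (1), uniformity of the bound in $j$ --- needs attention.

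To prove the auxiliary estimate I would fix $\epsilon\in(0,1]$ and apply Cauchy--Schwarz against the weight $(1+|x|)^{-n-\epsilon}$, obtaining
\[
\int(1+|x|)^L|\mathscr F^{-1}(g)|\,dx\leq\Big(\int(1+|x|)^{-n-\epsilon}\,dx\Big)^{1/2}\Big(\int(1+|x|)^{2L+n+\epsilon}|\mathscr F^{-1}(g)|^2\,dx\Big)^{1/2}.
\]
The first factor is finite; in the second, $2L+n+\epsilon\leq 2(L+[n/2]+1)\leq 2m$ --- here the value $[n/2]+1$ enters, being the least integer $k$ with $2k>n$ --- so $(1+|x|)^{2L+n+\epsilon}\leq C\sum_{|\gamma|\leq m}x^{2\gamma}$, and then Plancherel together with $x^\gamma\mathscr F^{-1}(g)=(-2\pi i)^{-|\gamma|}\mathscr F^{-1}(\partial^\gamma g)$ (integration by parts, no boundary terms since $g$ has compact support) turns $\int x^{2\gamma}|\mathscr F^{-1}(g)|^2\,dx$ into $(2\pi)^{-2|\gamma|}\|\partial^\gamma g\|_{L^2}^2$. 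I expect this passage to $L^2$ to be the crux: the pointwise bound $|\mathscr F^{-1}(g)(x)|\lesssim(1+|x|)^{-m}$, all one gets from $C^m$ plus compact support, is \emph{not} integrable against $(1+|x|)^L$ on $\Bbb R^n$, so the extra ``half dimension'' must come from square-integrability rather than from decay.

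For part (1): Lemma \ref{L2.1} shows $\hat\varphi\in C^{L+[n/2]+1}(\Bbb R^n\setminus\{0\})$ forces $\hat\eta\in C^{L+[n/2]+1}(\Bbb R^n)$, while $\hat\psi\in C^{L+[n/2]+1}(\Bbb R^n\setminus\{0\})$ by hypothesis; since $g_j$ is supported in $\{r_1<|\xi|<r_2\}$ and, when $b^j\leq J$, the dilated argument satisfies $|b^{-j}\xi|\geq r_1 J^{-1}>0$ --- within the region where the $B^{L+[n/2]+1}_\tau$-bounds on $\hat\psi$ apply, also using continuity of $\partial^\gamma\hat\psi$ on compact annuli away from the origin --- each $g_j$ is $C^{L+[n/2]+1}$ with support in a fixed annulus. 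Differentiating $g_j$ by the Leibniz and chain rules, each derivative $\partial^{\gamma'}$ hitting $\xi\mapsto\hat\psi(b^{-j}\xi)$ produces a factor $b^{-j|\gamma'|}$ that is exactly cancelled by the $b^{j(\tau+|\gamma'|)}$ arising from $|(\partial^{\gamma'}\hat\psi)(b^{-j}\xi)|\leq C|b^{-j}\xi|^{-\tau-|\gamma'|}$; hence $|\partial^\gamma g_j(\xi)|\leq C(b^j)^\tau\leq CJ^\tau$, uniformly over $j$ with $b^j\leq J$, so $\sum_{|\gamma|\leq L+[n/2]+1}\|\partial^\gamma g_j\|_{L^2}\leq Cb^{j\tau}$ and the auxiliary estimate gives $C_\varphi(\psi,j,L)\leq Cb^{j\tau}$, which is (1). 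For part (2): on $\{r_1<b^j|\xi|<r_2\}$ at most $\lceil\log_{1/b}(r_2/r_1)\rceil+1$ indices $j$ with $b^j\leq J$ contribute to $\hat\zeta_J=1-\sum_{j:\,b^j\leq J}\hat\varphi(b^j\xi)\hat\eta(b^j\xi)$, while $\hat\zeta_J\equiv1$ near the origin; thus (using $\hat\varphi\in C^{L+[n/2]+1}(\Bbb R^n\setminus\{0\})$ and $\hat\eta\in C^{L+[n/2]+1}$) $\hat\zeta_J\in C^{L+[n/2]+1}(\Bbb R^n)$ with support in $\{|\xi|\leq r_2J^{-1}\}$, so $\hat\zeta_J\Theta$ is $C^{L+[n/2]+1}$ with compact support and the auxiliary estimate yields $D_\varphi(\Theta,J,L)<\infty$. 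For part (3): the Leibniz rule applied to $\mathscr F(\psi^{(k)})=2\pi i\xi_k\hat\varphi$ shows that $\varphi\in B^{L+[n/2]+1}_{L+1+\tau}$ implies $\psi^{(k)}\in B^{L+[n/2]+1}_{L+\tau}$ --- multiplying by the monomial $\xi_k$ and differentiating it at most once lowers the decay exponent from $L+1+\tau$ to $L+\tau$ --- so part (1) with $\tau$ replaced by $L+\tau$ gives $\sup_{j:\,b^j\leq J}C_\varphi(\psi^{(k)},j,L)b^{-jL-j\tau}<\infty$, and part (2) with $\Theta=\Xi_k=2\pi i\xi_k\in C^\infty(\Bbb R^n)$ and $J=1$ gives $D_\varphi(\Xi_k,1,L)<\infty$.

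In short, the routine work is the three reductions to the auxiliary estimate plus Leibniz/chain-rule bookkeeping; the genuine obstacle is the auxiliary estimate --- extracting an extra $[n/2]+1$ derivatives' worth of weight via a Plancherel ($L^2$) argument rather than a decay argument --- together with checking that the dilation $\xi\mapsto b^{-j}\xi$ in part (1) contributes no net power of $b^{-j}$, so that the bound is uniform over all $j$ with $b^j\leq J$.
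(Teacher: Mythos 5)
Your proposal is correct and follows essentially the same route as the paper: the auxiliary estimate (weighted $L^1$ norm of $\mathscr F^{-1}(g)$ controlled by $L^2$ norms of $\partial^\gamma g$ for $|\gamma|\le L+[n/2]+1$ via Cauchy--Schwarz against an integrable weight plus Plancherel) is exactly the paper's mechanism, as is the chain-rule cancellation $b^{-j|\gamma'|}\cdot b^{j(\tau+|\gamma'|)}=b^{j\tau}$ giving uniformity in $j$, and the reduction of part (3) to parts (1) and (2) via $\psi^{(k)}\in B^{L+[n/2]+1}_{L+\tau}$. The only cosmetic difference is your choice of weight $(1+|x|)^{-n-\epsilon}$ versus the paper's $(1+|x|)^{-2([n/2]+1)}$.
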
  
\begin{proof} 
Part (3) follows from part (1) and part (2) since 
$\psi^{(k)} \in B^{L+[n/2]+1}_{L+\tau}$ and 
$\hat{\varphi} \in C^{L+[n/2]+1}(\Bbb R^n\setminus\{0\})$  
if $\varphi \in B^{L+[n/2]+1}_{L+1+\tau}$. 
To prove part (1),   
we note that 
\begin{multline*} 
(1+|x|)^{[n/2]+1}C_0(\psi,t,L,x) 
\\ 
\leq C\left|\int \hat{\psi}(t^{-1}\xi)\hat{\eta}(\xi)e^{2\pi i
\langle x, \xi\rangle}\, d\xi\right|+
C\sup_{|\gamma|=L+[n/2]+1}
\left|\int \partial_\xi^\gamma\left[\hat{\psi}(t^{-1}\xi)
\hat{\eta}(\xi)\right]e^{2\pi i\langle x, \xi\rangle}\, d\xi\right|,     
\end{multline*}  
where $C_0(\psi,t,L,x)$ is as in \eqref{czero}. We note that 
$\hat{\eta} \in C^{L+[n/2]+1}(\Bbb R^n)$ by Lemma \ref{L2.1}, since 
$\hat{\varphi} \in C^{L+[n/2]+1}(\Bbb R^n\setminus\{0\})$.   
The assumption  $\psi\in B^{L+[n/2]+1}_\tau$ implies 
$$\left|\partial_\xi^\gamma \left[\hat{\psi}(t^{-1}\xi)
\hat{\eta}(\xi)\right]\right|\leq C_Mt^{\tau}, \quad 0<t\leq M, $$  
for any $M>0$, 
if $|\gamma|= L+[n/2]+1$ or $\gamma=0$.  It follows that 
\begin{equation*} 
C_0(\psi,t,L,x)\leq C(1+|x|)^{-[n/2]-1}G(x) 
\end{equation*}
with some $G\in L^2$ such that $\|G\|_2\leq Ct^{\tau}$.  Thus, 
since $[n/2]+1>n/2$, by the Schwarz inequality we have   
\begin{equation}\label{ineq12}
\int_{\Bbb R^n}C_0(\psi,t,L,x)\, dx \leq  Ct^{\tau}.    
\end{equation} 
The conclusion of part (1) follows from \eqref{ineq12} with $t=b^j$. 
\par 
Likewise, we have  
\begin{equation*}\label{ineq13}
\int_{\Bbb R^n}D(\Theta,J,L,x)\, dx <\infty    
\end{equation*} 
under the assumptions of part (2), 
where  $D(\Theta,J,L,x)$ is as in \eqref{d}, which proves part (2).   
\end{proof} 
By Lemma \ref{L2.8} and Proposition \ref{T2.3} we have the following. 
\begin{theorem}\label{C2.9} 
Let $\varphi \in L^1(\Bbb R^n)$ satisfy \eqref{nondegeneracy} with $M=1$.  
Suppose that $\psi \in L^1(\Bbb R^n)$ and 
$\hat{\psi}(\xi)=\hat{\varphi}(\xi)\Theta(\xi)$ in a neighborhood 
of the origin with some $\Theta\in C^\infty(\Bbb R^n)$. 
Let $0<p, q<\infty$ and let 
$N$ be a positive integer such that $N>\max(n/p,n/q)$.  Let $w\in A_{pN/n}$. 
Suppose that 
$\varphi$ belongs to $B^{N+[n/2]+1}_{N+1+\epsilon}$ for some $\epsilon>0$ and  
satisfies \eqref{alpha} and \eqref{beta}. 
Also, suppose that 
$\psi\in B^{N+[n/2]+1}_{\epsilon}$ for some $\epsilon>0$. 
Then we have 
$$\left\|\left(\int_0^\infty|f*\psi_t|^q\, \frac{dt}{t}
\right)^{1/q}\right\|_{p,w} 
\leq C\left\|\left(\int_0^\infty|f*\varphi_t|^q\, 
\frac{dt}{t}\right)^{1/q}\right\|_{p,w}   $$  
for $f\in \mathscr S(\Bbb R^n)$, where  $C$ is a positive constant 
independent of $f$.  
\end{theorem}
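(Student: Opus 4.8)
The plan is to obtain Theorem~\ref{C2.9} as a direct consequence of Proposition~\ref{T2.3}, the role of Lemma~\ref{L2.8} being to convert the easily checkable membership hypotheses on $\varphi$ and $\psi$ into the technical conditions \eqref{alpha}, \eqref{beta}, \eqref{ineq5}, \eqref{ineq5+}, \eqref{ineq4}, \eqref{ineq4+} demanded by that proposition. First I would record that the exponent hypothesis $N>\max(n/p,n/q)$ is the same as $n/N<p$ and $n/N<q$, and that $w\in A_{pN/n}$ is assumed; so the hypotheses of Proposition~\ref{T2.3} concerning $N$, $p$, $q$, $w$ hold verbatim. Next, fix $b\in[b_0,1)$ and let $r_1<r_2$ and $\eta$ be as in Lemma~\ref{L2.1} with $M=1$; since $\hat{\psi}=\hat{\varphi}\Theta$ in some neighborhood of the origin, I would choose $A\geq 1$ large enough that $\{|\xi|<r_2A^{-1}\}$ lies inside that neighborhood, which puts $(\varphi,\psi)$ in the situation of \eqref{nearorigin} with this $\Theta$ and $A$.

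Now turn to the conditions on $\varphi$. Conditions \eqref{alpha} and \eqref{beta} are assumed outright. For \eqref{ineq5} with $L=N$: since $N$ is a positive, hence non-negative, integer and $\varphi\in B^{N+[n/2]+1}_{N+1+\epsilon}$, I would apply Lemma~\ref{L2.8}(3) with $L=N$, $\tau=\epsilon$, $J=1$; here $\partial_k\varphi\in L^1(\Bbb R^n)$ by \eqref{alpha} and $\mathscr F(\partial_k\varphi)(\xi)=2\pi i\xi_k\hat{\varphi}(\xi)=\Xi_k(\xi)\hat{\varphi}(\xi)$, so that $\partial_k\varphi$ is precisely the function $\psi^{(k)}$ of that lemma, and part (3) gives $\sup_{j\geq 0}C_\varphi(\partial_k\varphi,j,N)b^{-jN-j\epsilon}<\infty$ for each $k$; summing over $k$ yields \eqref{ineq5}. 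The same part of Lemma~\ref{L2.8} gives $D_\varphi(\Xi_k,1,N)<\infty$ for each $k$, hence $D_\varphi(N)=\sum_{k=1}^nD_\varphi(\Xi_k,1,N)<\infty$, which together with \eqref{alpha} is \eqref{ineq5+}. One uses throughout that $\varphi\in B^{N+[n/2]+1}_{N+1+\epsilon}$ forces $\hat{\varphi}\in C^{N+[n/2]+1}(\Bbb R^n\setminus\{0\})$, as required by Lemma~\ref{L2.8}.

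Finally, for $\psi$: again $\hat{\varphi}\in C^{N+[n/2]+1}(\Bbb R^n\setminus\{0\})$, and $\psi\in B^{N+[n/2]+1}_\epsilon$ by hypothesis, so Lemma~\ref{L2.8}(1) with $L=N$, $\tau=\epsilon$, $J=A$ gives $\sup_{j:\,b^j\leq A}C_\varphi(\psi,j,N)b^{-\epsilon j}<\infty$, i.e., \eqref{ineq4}; and Lemma~\ref{L2.8}(2) with $L=N$, $J=A$ and the given $\Theta\in C^\infty(\Bbb R^n)$ gives $D_\varphi(\Theta,A,N)<\infty$, i.e., \eqref{ineq4+}. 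All hypotheses of Proposition~\ref{T2.3} (with $L=N$) are now in force, and that proposition delivers the asserted inequality for $f\in\mathscr S(\Bbb R^n)$ with $C$ independent of $f$. There is essentially no analytic obstacle here, the substance having already been carried out in Lemma~\ref{L2.8} and Proposition~\ref{T2.3}; the only point that needs a little attention is the bookkeeping between the index range $j\geq 0$ (that is, $b^j\leq 1$) used for the conditions on $\varphi$ and the range $b^j\leq A$ used for those on $\psi$, together with the compatible choice of $A$ relative to the neighborhood of the origin on which $\hat{\psi}=\hat{\varphi}\Theta$.
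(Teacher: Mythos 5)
Your proposal is correct and follows essentially the same route as the paper: verify the hypotheses \eqref{ineq5}, \eqref{ineq5+} via Lemma \ref{L2.8}(3) with $J=1$, $\tau=\epsilon$, and \eqref{ineq4}, \eqref{ineq4+} via Lemma \ref{L2.8}(1),(2) with $J=A$, then invoke Proposition \ref{T2.3}. The extra bookkeeping you supply (the identification $\partial_k\varphi=\psi^{(k)}$, the choice of $A$, and the equivalence of $b^j\leq 1$ with $j\geq 0$) is implicit in the paper's argument.
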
 
\begin{proof} 
If we have \eqref{alpha} and if $\varphi\in 
B^{N+[n/2]+1}_{N+1+\epsilon}$, then \eqref{ineq5} and \eqref{ineq5+} 
hold  with $L=N$ by part (3) of Lemma \ref{L2.8} with $J=1$, 
$\tau=\epsilon$, $L=N$.    Since 
$\psi\in B^{N+[n/2]+1}_{\epsilon}$ and $\varphi \in C^{N+[n/2]+1}(\Bbb R^n
\setminus \{0\})$,  if $\hat{\psi}(\xi)=\hat{\varphi}(\xi)\Theta(\xi)$ 
on $\{|\xi|<r_2A^{-1}\}$, $A\geq 1$, 
 we have \eqref{ineq4} and \eqref{ineq4+} with 
$L=N$ by part (1) of Lemma \ref{L2.8} with $J=A$, $\tau=\epsilon$, $L=N$ and  
part (2) of Lemma \ref{L2.8} with $J=A$, $L=N$, respectively. 
Thus Proposition \ref{T2.3} implies the conclusion.  
\end{proof} 

This immediately implies the following. 

\begin{theorem}\label{C2.10} 
Let $\varphi\in L^1(\Bbb R^n)$ satisfy \eqref{nondegeneracy} with $M=1$, 
\eqref{alpha} and \eqref{beta}.  
We assume that $0<p, q<\infty$ and $N$ is a positive integer satisfying 
$N>\max(n/p,n/q)$.  Let $w\in A_{pN/n}$.   Suppose that 
$\varphi\in B^{N+[n/2]+1}_{N+1+\epsilon}$ for some $\epsilon>0$. 
Then, if $\psi\in \mathscr S(\Bbb R^n)$ and $\hat{\psi}$ vanishes in a 
neighborhood of  the origin,  the inequality 
$$\left\|\left(\int_0^\infty|f*\psi_t|^q\, \frac{dt}{t}
\right)^{1/q}\right\|_{p,w} 
\leq C\left\|\left(\int_0^\infty|f*\varphi_t|^q\, 
\frac{dt}{t}\right)^{1/q}\right\|_{p,w}, \quad f\in \mathscr S(\Bbb R^n),   
$$  
holds with a positive constant $C$ independent of $f$.
\end{theorem}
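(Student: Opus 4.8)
The plan is to deduce this directly from Theorem~\ref{C2.9} by checking that a function $\psi\in\mathscr S(\Bbb R^n)$ whose Fourier transform vanishes in a neighborhood of the origin satisfies the two additional hypotheses imposed on $\psi$ there, while the hypotheses on $\varphi$, $p$, $q$, $N$ and $w$ are already assumed verbatim in the present statement.

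First I would record that $\psi\in\mathscr S(\Bbb R^n)\subset L^1(\Bbb R^n)$, so $\psi$ is an admissible function. Next, since $\hat\psi$ vanishes on some ball $\{|\xi|<\rho\}$ with $\rho>0$, the relation $\hat\psi(\xi)=\hat\varphi(\xi)\Theta(\xi)$ holds on that neighborhood of the origin with the choice $\Theta\equiv 0$, which trivially belongs to $C^\infty(\Bbb R^n)$; thus the factorization hypothesis of Theorem~\ref{C2.9} is met (with the corresponding constant $A\geq 1$ chosen so that $r_2A^{-1}<\rho$). Finally, as already observed right after the definition of the classes $B^l_\tau$, every Schwartz function lies in $B^l_\tau$ for every non-negative integer $l$ and every $\tau\ge 0$: the bound $|\partial_\xi^\gamma\hat\psi(\xi)|\le C_\gamma|\xi|^{-\tau-|\gamma|}$ outside a neighborhood of the origin follows at once from the rapid decay of $\hat\psi$ together with all of its derivatives. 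In particular $\psi\in B^{N+[n/2]+1}_{1}$, so the hypothesis $\psi\in B^{N+[n/2]+1}_{\epsilon}$ is satisfied with $\epsilon=1$.

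With these verifications in hand, Theorem~\ref{C2.9} applies with the present $\varphi,\psi,p,q,N,w$ and with $\Theta\equiv 0$, and it yields precisely the asserted weighted vector valued inequality. I do not foresee any genuine obstacle here: the entire content of the statement is contained in Theorem~\ref{C2.9}, and the only point worth noting is that the assumption ``$\hat\psi$ vanishes near the origin'' is strictly stronger than, and hence implies, the factorization hypothesis ``$\hat\psi=\hat\varphi\,\Theta$ near the origin'' together with $\psi\in B^{N+[n/2]+1}_{\epsilon}$, so the reduction is immediate.
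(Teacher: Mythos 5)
Your reduction is exactly the paper's own proof: take $\Theta\equiv 0$ in the factorization $\hat\psi=\hat\varphi\,\Theta$ near the origin, note that $\psi\in\mathscr S(\Bbb R^n)$ lies in $B^{N+[n/2]+1}_{1}$, and invoke Theorem \ref{C2.9}. The argument is correct and there is nothing to add.
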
 
\begin{proof} We see that 
$\hat{\psi}(\xi)=\hat{\varphi}(\xi)\Theta(\xi)$ in a neighborhood 
of the origin with $\Theta$ being identically $0$.  
Obviously, $\psi\in B^{N+[n/2]+1}_{1}$. So all the requirements for 
$\varphi$ and $\psi$ in Theorem \ref{C2.9} are satisfied. Thus the 
conclusion follows from Theorem \ref{C2.9}.  This completes the proof.
\end{proof} 
We note that $Q$ fulfills all the requirements on $\varphi$  in 
Theorem \ref{C2.10} for every $N$.  
The same is true of $\varphi_0 \in \mathscr 
S(\Bbb R^n)$ satisfying \eqref{nondegeneracy} (with $M=1$) and  
\eqref{cancell}.

\section{Littlewood-Paley operators and Hardy spaces}  

Let $\mathscr H$ denote the Hilbert space of 
functions $u(t)$ on $(0,\infty)$ such that $\|u\|_{\mathscr H}=
\left(\int_0^\infty|u(t)|^2\, dt/t\right)^{1/2}<\infty$.  
We first recall Hardy spaces of functions on $\Bbb R^n$ with 
values in $\mathscr H$, which will be used to prove \eqref{reverse} by 
Theorem \ref{C2.10} (see Corollary \ref{C3.1} below).  
\par 
The Lebesgue space $L^q_{\mathscr H}(\Bbb R^n)$ consists of functions 
$h(y,t)$ with the norm  
$$\|h\|_{q,\mathscr H}=\left(\int_{\Bbb R^n}\|h^y\|_{\mathscr H}^q \, dy
\right)^{1/q}, $$ 
where $h^y(t)=h(y,t)$. 
For $0<p\leq 1$,  we consider the Hardy space $H^p_{\mathscr H}(\Bbb R^n)$ of 
 functions on $\Bbb R^n$ with values in $\mathscr H$.  
We take $\varphi\in \mathscr S(\Bbb R^n)$ with $\int \varphi(x)\, dx=1$. 
Let $h \in L^2_{\mathscr H}(\Bbb R^n)$.  
We recall that  $h \in H^p_{\mathscr H}(\Bbb R^n)$ if 
$\|h\|_{H^p_{\mathscr H}}= \|h^*\|_{L^p}<\infty$, where  
$$h^*(x)=\sup_{s>0}\left(\int_0^\infty |\varphi_s*h^{t}(x)|^2\, \frac{dt}{t}
\right)^{1/2}, $$ with $h^{t}(x)=h(x,t)$. 
\par 
If $a$ is a $(p, \infty)$ atom in  $H^p_{\mathscr H}(\Bbb R^n)$, we have  
\begin{enumerate} 
\item[(i)] $\left(\int_0^\infty |a(x,t)|^2\, dt/t \right)^{1/2} \leq 
|Q|^{-1/p}$, where $Q$ is a cube in $\Bbb R^n$ with sides parallel to the 
coordinate axes; 
\item[(ii)] $\sup(a(\cdot, t))\subset Q$ uniformly in $t>0$, where $Q$ is the
 same as in $(i);$  
\item[(iii)] $\int_{\Bbb R^n} a(x,t)x^\gamma \, dx=0$ for all $t>0$ and 
$\gamma$ such that $|\gamma|\leq [n(1/p -1)]$, where    
$\gamma=(\gamma_1, \dots, \gamma_n)$ is a multi-index and  
$x^\gamma=x_1^{\gamma_1}\dots x_n^{\gamma_n}$. 
\end{enumerate}  
\par 
We apply the following atomic decomposition.  
\begin{lemma}\label{L3.4}
Let $h\in L^2_{\mathscr H}(\Bbb R^n)$. If 
 $h\in H^p_{\mathscr H}(\Bbb R^n)$, 
then there exist a sequence $\{a_k\}$ of 
$(p,\infty)$ atoms in $H^p_{\mathscr H}(\Bbb R^n)$ and a sequence 
$\{\lambda_k\}$ of positive numbers such that 
$\sum_{k=1}^\infty\lambda_k^p\leq C\|h\|_{H^p_{\mathscr H}}^p$ with 
a constant $C$ independent of $h$ and 
$h=\sum_{k=1}^\infty\lambda_k a_k$ in $H^p_{\mathscr H}(\Bbb R^n)$ and in 
$L^2_{\mathscr H}(\Bbb R^n)$.   
\end{lemma}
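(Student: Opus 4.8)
The plan is to prove Lemma~\ref{L3.4} by running the classical Calder\'on--Zygmund/Fefferman--Stein construction of the atomic decomposition of $H^p(\Bbb R^n)$, carried out for $\mathscr H$-valued functions; since $\mathscr H$ is a Hilbert space, every step of the scalar argument transcribes essentially verbatim, with the scalar absolute value replaced throughout by the norm $\|\cdot\|_{\mathscr H}$. I would begin by regarding $h$ as a function $h\colon\Bbb R^n\to\mathscr H$, so that $h^*(x)=\sup_{s>0}\|(\varphi_s*h)(x)\|_{\mathscr H}$ is precisely the radial maximal function of the $\mathscr H$-valued function $h$, the convolution being taken in the $\Bbb R^n$-variable. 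The first step is to invoke the $\mathscr H$-valued analogue of the grand-maximal-function characterisation of $H^p$, namely $\|h\|_{H^p_{\mathscr H}}\approx\|\mathcal M h\|_{L^p}$, where $\mathcal M h(x)$ is the supremum of $\|(\Phi_s*h)(x)\|_{\mathscr H}$ over $s>0$ and over $\Phi$ ranging in a suitable bounded subset of $\mathscr S(\Bbb R^n)$ (enough derivatives, enough decay); the proof of this equivalence is that of \cite{FeS2} and uses only the triangle inequality in $\mathscr H$ together with the scalar Hardy--Littlewood maximal estimate, so it applies without change.

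Granting this, for $h\in L^2_{\mathscr H}\cap H^p_{\mathscr H}$ I would set $\Omega_k=\{x:\mathcal M h(x)>2^k\}$, which are open of finite measure, take a Whitney decomposition $\Omega_k=\bigcup_j Q_{k,j}$ with an associated smooth partition of unity $\{\zeta_{k,j}\}$, and form the usual good/bad splitting $h=g_k+b_k$, $b_k=\sum_j b_{k,j}$, where $b_{k,j}$ arises from $h\zeta_{k,j}$ by subtracting the unique polynomial in $y$ of degree $\le[n(1/p-1)]$ --- its coefficients now elements of $\mathscr H$, determined by the usual Gram--Schmidt procedure against $\{y^\gamma\zeta_{k,j}\}$, a purely linear-algebraic step in the $y$-variable that is blind to the Hilbert-valued coefficients --- chosen so that $\int b_{k,j}(y,t)\,y^\gamma\,dy=0$ in $\mathscr H$ for every $t>0$ and every $|\gamma|\le[n(1/p-1)]$. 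Telescoping $h=\sum_k(g_{k+1}-g_k)$ then writes $g_{k+1}-g_k=\sum_j A_{k,j}$ with $A_{k,j}$ supported in a fixed dilate of $Q_{k,j}$, having the required vanishing moments, and satisfying $\|A_{k,j}(x,\cdot)\|_{\mathscr H}\le C\,2^k$; setting $\lambda_{k,j}=C\,2^k|Q_{k,j}|^{1/p}$ and $a_{k,j}=\lambda_{k,j}^{-1}A_{k,j}$ (after enlarging each $Q_{k,j}$ to a cube with sides parallel to the axes) produces $(p,\infty)$ atoms of $H^p_{\mathscr H}$ in the sense of (i)--(iii).

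The coefficient bound is then immediate from the bounded overlap of the Whitney cubes of a fixed generation, which gives $\sum_j|Q_{k,j}|\le C|\Omega_k|$ and hence
\[
\sum_{k,j}\lambda_{k,j}^p\le C\sum_k 2^{kp}|\Omega_k|\le C\int_{\Bbb R^n}\mathcal M h(x)^p\,dx\le C\|h\|_{H^p_{\mathscr H}}^p .
\]
For convergence I would argue in $L^2_{\mathscr H}$ by the standard route, using $h\in L^2_{\mathscr H}$: one checks $g_k\to h$ in $L^2_{\mathscr H}$ as $k\to\infty$ by dominated convergence on the shrinking sets $\Omega_k^*$ (the correcting polynomials being dominated by the local $L^2_{\mathscr H}$-mass of $h$), and $g_k\to0$ in $L^2_{\mathscr H}$ as $k\to-\infty$ using $2^{pk}|\Omega_k|\le\|h^*\|_p^p$ together with $p<2$ and the fact that $h(x,\cdot)=0$ in $\mathscr H$ for a.e.\ $x$ in $\{h^*=0\}$ (Lebesgue differentiation in the $y$-variable). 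Convergence in $H^p_{\mathscr H}$ would then follow as in \cite{FeS2}, by applying $\mathcal M$ to the tails of the series and dominating by $C\sum_{|k|\ge K}2^{kp}|\Omega_k|\to0$.

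The step I expect to require the most care is not the geometry --- that is a verbatim copy of the scalar construction --- but the bookkeeping that guarantees the objects produced are genuinely atoms of $H^p_{\mathscr H}$ in the precise sense (i)--(iii): that the subtracted correcting polynomials have $\mathscr H$-valued coefficients of the correct size, that the moment identity $\int a_{k,j}(y,t)\,y^\gamma\,dy=0$ holds as an equality in $\mathscr H$ for \emph{each} fixed $t$ (so the $t$-parameter must be carried along and never integrated out), and that the size estimate is the pointwise-in-$x$ Hilbert-norm bound $\bigl(\int_0^\infty|a_{k,j}(x,t)|^2\,dt/t\bigr)^{1/2}\le|Q_{k,j}|^{-1/p}$. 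All of this should go through precisely because $\mathscr H$ is Hilbert and the analytic ingredients used --- the Hardy--Littlewood maximal estimate and the $L^p$ distribution-function bounds for the grand maximal function --- are insensitive to the Banach-space structure. As an alternative, one may simply cite the atomic decomposition for Hardy spaces of Hilbert-space-valued functions, of which Lemma~\ref{L3.4} is the case of values in $\mathscr H$.
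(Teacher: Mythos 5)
Your proposal is correct and takes essentially the same route as the paper, which simply cites the scalar atomic decomposition of \cite{GR} and \cite{ST} and asserts that the same methods apply in the $\mathscr H$-valued case --- precisely the Calder\'on--Zygmund/grand-maximal-function construction you transcribe. The points you single out as needing care (Hilbert-valued coefficients of the correcting polynomials, the moment condition holding in $\mathscr H$ for each fixed $t$, and the pointwise $\mathscr H$-norm size bound) are indeed the only places the vector-valued structure enters, and they go through as you describe because $\mathscr H$ is a Hilbert space.
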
 
A proof of the atomic decomposition for $H^p(\Bbb R^n)$ can be found in 
\cite{GR} and \cite{ST}.    
 Similar methods apply to the vector valued case.     
\par 
In this section, we prove the following result 
as an application of Theorem \ref{C2.10}. 
\begin{corollary}\label{C3.1} 
Let $0<p\leq 1$, $N>n/p$. 
Suppose that $\varphi \in L^1(\Bbb R^n)$ satisfies \eqref{nondegeneracy} with 
$M=1$, \eqref{alpha},  \eqref{beta} and suppose that 
$\varphi\in B^{N+[n/2]+1}_{N+1+\epsilon}$ for some $\epsilon>0$. 
Then we have 
$$ \|f\|_{H^p}\leq C_p\|g_\varphi(f)\|_p  $$  
for $f\in H^p(\Bbb R^n)\cap \mathscr S(\Bbb R^n)$, where $C_p$ is a positive 
constant independent of $f$. 
\end{corollary}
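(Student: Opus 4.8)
The plan is to deduce Corollary \ref{C3.1} from Theorem \ref{C2.10} by a Calderón reproducing formula argument carried out in the Hilbert-space-valued Hardy space $H^p_{\mathscr H}(\Bbb R^n)$, using the atomic decomposition of Lemma \ref{L3.4}. Fix once and for all a radial $\psi\in\mathscr S(\Bbb R^n)$ with $\hat\psi\ge 0$, $\supp\hat\psi\subset\{a_1\le|\xi|\le a_2\}$, $\hat\psi>0$ on a smaller annulus, normalized so that $\int_0^\infty|\hat\psi(t\xi)|^2\,dt/t=1$ for all $\xi\ne 0$; in particular $\hat\psi$ vanishes near the origin. Choosing an integer $N>n/p$ (so also $N>n/2$) and the weight $w\equiv 1$, which lies in $A_{pN/n}$ precisely because $N>n/p$, Theorem \ref{C2.10} applies with $q=2$ to the $\varphi$ of Corollary \ref{C3.1} and to this $\psi$, giving $\|g_\psi(f)\|_p\le C\|g_\varphi(f)\|_p$. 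It therefore suffices to prove $\|f\|_{H^p}\le C\|g_\psi(f)\|_p$ for $f\in H^p(\Bbb R^n)\cap\mathscr S(\Bbb R^n)$.

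Second, put $h(x,t)=f*\psi_t(x)$, so $\|h\|_{p,\mathscr H}=\|g_\psi(f)\|_p$ and $h\in L^2_{\mathscr H}(\Bbb R^n)$ when $f\in\mathscr S(\Bbb R^n)$. I would show $h\in H^p_{\mathscr H}(\Bbb R^n)$ with $\|h\|_{H^p_{\mathscr H}}\le C\|g_\psi(f)\|_p$. Write $h^*(x)=\sup_{s>0}\big(\int_0^\infty|\Phi_s*h^t(x)|^2\,dt/t\big)^{1/2}$, where $\Phi\in\mathscr S(\Bbb R^n)$, $\int\Phi=1$, and choose $\Psi\in\mathscr S(\Bbb R^n)$ with $\hat\Psi\equiv 1$ on a neighbourhood of $\supp\hat\psi$ and $\hat\Psi$ compactly supported away from the origin, so that $\psi_t=\Psi_t*\psi_t$ and $\Phi_s*h^t=(\Phi_s*\Psi_t)*(f*\psi_t)$. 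Since $\Psi$ has vanishing moments of all orders, the standard kernel estimate $|\Phi_s*\Psi_t(y)|\le C_K\max(s,t)^{-n}(1+|y|/\max(s,t))^{-K}$, improved by a factor $(t/s)^K$ when $s\ge t$, yields $|\Phi_s*h^t(x)|\le C\,M_r(|f*\psi_t|)(x)$ for any fixed $r\in(0,p)$, where $M_r(g)=M(|g|^r)^{1/r}$, and it makes $\int_0^\infty|\Phi_s*h^t(x)|^2\,dt/t$ bounded uniformly in $s$ by $C\int_0^\infty M_r(|f*\psi_t|)(x)^2\,dt/t$. Hence $h^*(x)\le C\big(\int_0^\infty M_r(|f*\psi_t|)(x)^2\,dt/t\big)^{1/2}$, and since $2/r>1$ and $p/r>1$, Lemma \ref{L2.6} with $\mu=2/r$, $\nu=p/r$, $w\equiv 1$, applied to the family $\{|f*\psi_t|^r\}_{t>0}$, together with the elementary identity $\|F^{1/r}\|_p=\|F\|_{p/r}^{1/r}$, gives $\|h^*\|_p\le C\|g_\psi(f)\|_p$.

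Third, by Lemma \ref{L3.4} write $h=\sum_k\lambda_k a_k$ in $H^p_{\mathscr H}(\Bbb R^n)$ and in $L^2_{\mathscr H}(\Bbb R^n)$, with $\sum_k\lambda_k^p\le C\|h\|_{H^p_{\mathscr H}}^p$ and $a_k$ a $(p,\infty)$ atom supported, in $x$ uniformly in $t$, in a cube $Q_k$ of side $\ell_k$ centred at $x_k$. The normalization of $\psi$ makes the Calderón formula $f=\int_0^\infty\psi_t*(f*\psi_t)\,dt/t$ hold in $L^2(\Bbb R^n)$, hence in $\mathscr S'(\Bbb R^n)$, for $f\in H^p(\Bbb R^n)\cap\mathscr S(\Bbb R^n)$: on the Fourier side it reduces to $\hat f(\xi)=\big(\int_0^\infty|\hat\psi(t\xi)|^2\,dt/t\big)\hat f(\xi)$, the point $\xi=0$ being harmless since $\hat f(0)=0$, and the $dt/t$-integral converges absolutely in $L^2$ because $\|f*\psi_t\|_2\le C_M\min(t,t^{-1})^M$ for every $M$ (as $\hat\psi$ is supported in an annulus and $\hat f$ is Schwartz with $\hat f(0)=0$). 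Since moreover $h\mapsto\int_0^\infty\psi_t*h^t\,dt/t$ is bounded from $L^2_{\mathscr H}$ to $L^2$ (Plancherel, Cauchy--Schwarz in $t$, and the normalization), we get $f=\sum_k\lambda_k b_k$ in $L^2(\Bbb R^n)$, where $b_k=\int_0^\infty\psi_t*a_k(\cdot,t)\,dt/t$.

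Finally, I would verify that each $b_k$ is a fixed constant multiple of an $H^p$ molecule adapted to $Q_k$: (i) $b_k$ has vanishing moments through order $[n(1/p-1)]$, which follows — once (iii) guarantees $b_k\in L^1$ — from the vanishing of the moments of $a_k(\cdot,t)$ through that order, via $\widehat{b_k}(\xi)=O(|\xi|^{[n(1/p-1)]+1})$ near the origin; (ii) $\|b_k\|_2\le C|Q_k|^{1/2-1/p}$, by Plancherel, Cauchy--Schwarz in $t$, the normalization, and the size bound $\|a_k(y,\cdot)\|_{\mathscr H}\le|Q_k|^{-1/p}$ (which controls $\int_0^\infty\!\int|a_k(y,t)|^2\,dy\,dt/t$); (iii) $|b_k(x)|\le C\ell_k^{\,n-n/p+\delta}|x-x_k|^{-n-\delta}$ for $|x-x_k|\ge 2\sqrt n\,\ell_k$, with $\delta=[n(1/p-1)]+1$, obtained by replacing $\psi_t(x-\cdot)$ by its degree-$[n(1/p-1)]$ Taylor remainder about $x_k$, using the moment condition on $a_k$, the Schwartz bounds on the derivatives of $\psi_t$, and Cauchy--Schwarz in $t$. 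Since $n+\delta>n/p$, the standard molecular characterization of $H^p(\Bbb R^n)$ gives $\|b_k\|_{H^p}\le C$ with $C$ independent of $k$; as $\|\cdot\|_{H^p}^p$ is subadditive for $0<p\le 1$ and $\sum_k\lambda_k^p\|b_k\|_{H^p}^p<\infty$, the series $\sum_k\lambda_k b_k$ converges in $H^p(\Bbb R^n)$, necessarily to $f$ (comparing with the $L^2$ convergence), whence
$$\|f\|_{H^p}^p\le\sum_k\lambda_k^p\|b_k\|_{H^p}^p\le C\sum_k\lambda_k^p\le C\|h\|_{H^p_{\mathscr H}}^p\le C\|g_\psi(f)\|_p^p\le C\|g_\varphi(f)\|_p^p.$$
The main obstacle is this last step: verifying the molecular estimate (iii) — the singularity $|z|^{-n}$ of $\big(\int_0^\infty|\psi_t(z)|^2\,dt/t\big)^{1/2}$ at $z=0$ rules out a naive pointwise bound on $b_k$, so the cancellation of the atoms must be exploited there — and arranging the three successive limits (atomic decomposition, Calderón formula, reassembly in $H^p$) so that they all take place in compatible topologies.
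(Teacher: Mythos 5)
Your proposal is correct and follows the same overall architecture as the paper: reduce to a band-limited $\psi$ via Theorem \ref{C2.10} with $q=2$, $w\equiv 1$; bound the $H^p_{\mathscr H}$ norm of $h(x,t)=f*\psi_t(x)$ by $\|g_\psi(f)\|_p$ using Peetre maximal functions and the Fefferman--Stein inequality (Lemma \ref{L2.6}); and recover $\|f\|_{H^p}$ from $\|h\|_{H^p_{\mathscr H}}$ via the Calder\'on reproducing formula and the atomic decomposition of Lemma \ref{L3.4}. The differences are in execution rather than in substance. First, where you show that the image $b_k=\int_0^\infty \psi_t*a_k(\cdot,t)\,dt/t$ of each $\mathscr H$-valued atom is an $H^p$ molecule and then cite the molecular characterization, the paper (Lemma \ref{L3.3}) runs the identical Taylor-remainder/moment/Cauchy--Schwarz computation directly on $\sup_{s>0}|\Phi_s*F_\psi^\epsilon(a)|$ and so stays self-contained; your route buys a cleaner statement at the cost of an external citation, and both hinge on exactly the cancellation you correctly identify as the crux (the $|z|^{-n}$ singularity of $(\int_0^\infty|\psi_t(z)|^2\,dt/t)^{1/2}$). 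Second, the paper inserts an auxiliary $\eta$ with $\hat\eta=1$ on $\supp\hat\psi$ (Lemma \ref{L3.5}) and ends with $\|f\|_{H^p}\le C\|g_\eta(f)\|_p$ before invoking Theorem \ref{C2.10}, whereas you bound by $g_\psi(f)$ directly; this is cosmetic. One caution in your second step: the passage from the kernel decay of $\Phi_s*\Psi_t$ to $|\Phi_s*h^t(x)|\le C\,M(|f*\psi_t|^r)(x)^{1/r}$ with $r<p\le 1$ is not a consequence of kernel decay alone --- convolution with an $L^1$-normalized bump only controls $M$, not $M_r$ for $r<1$. You need the Peetre inequality for band-limited functions (or, as the paper does, the integrated version supplied by Lemma \ref{L2.4} applied to $\psi$, which satisfies its hypotheses); since the bound is only used inside $\int_0^\infty(\cdot)\,dt/t$, the integrated version suffices and the step closes. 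Likewise, the truncation issues you flag at the end (the integral defining $b_k$ need not converge absolutely) are precisely what the paper's $F_\psi^\epsilon$ and the $\liminf$ in the proof of Lemma \ref{L3.2} are there to handle, so your concern is well placed but resolvable along the paper's lines.
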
 
This can be generalized to an arbitrary $f\in H^p(\Bbb R^n)$ if $\varphi=Q$ or 
if $\varphi$ is a function in $\mathscr S(\Bbb R^n)$ satisfying 
\eqref{nondegeneracy} and \eqref{cancell} (see \cite{U}). 
\par 
 In proving Corollary \ref{C3.1}, we need the following.  
\begin{lemma}\label{L3.5} 
Suppose that $\eta\in \mathscr S(\Bbb R^n)$, $\supp(\hat{\eta})\subset 
\{1/2\leq |\xi|\leq 4\}$,  $\hat{\eta}(\xi)=1$ on 
$\{1\leq |\xi|\leq 2\}$ and   
that $\Phi\in \mathscr S(\Bbb R^n)$ satisfies 
$\int_{\Bbb R^n} \Phi(x)\, dx=1$. 
Let $\psi \in \mathscr S(\Bbb R^n)$ and 
 $\supp \hat{\psi}\subset \{1\leq |\xi|\leq 2\}$.  
Then, for $p, q>0$ and $f\in  \mathscr S(\Bbb R^n)$ we have 
$$\left\|\left(\int_0^\infty\sup_{s>0}|\Phi_s*\psi_t*f|^q\, \frac{dt}{t}
\right)^{1/q}\right\|_{p} 
\leq C\left\|\left(\int_0^\infty|\eta_t*f|^q\, 
\frac{dt}{t}\right)^{1/q}\right\|_{p}. $$  
\end{lemma}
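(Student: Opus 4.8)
The plan is to exploit that $\hat\psi$ is supported in the annulus on which $\hat\eta\equiv1$, so that $\psi_t=\psi_t*\eta_t$; this reduces $\sup_{s>0}|\Phi_s*\psi_t*f|$ to a pointwise bound by the Peetre maximal function $E(\eta,f)(\cdot,t)^{**}_{N,t^{-1}}$ with a constant uniform in $t$, after which Lemma \ref{L2.4} applied to $\eta$ and then Lemma \ref{L2.6} with weight $1$ close the estimate for all $0<p,q<\infty$. Concretely, first I would record that, since $\supp\hat\psi\subset\{1\le|\xi|\le2\}$ and $\hat\eta=1$ there, for each $t>0$ the dilate $\widehat{\psi_t}(\xi)=\hat\psi(t\xi)$ is supported in $\{1/t\le|\xi|\le2/t\}$, where $\widehat{\eta_t}(\xi)=\hat\eta(t\xi)=1$; hence $\psi_t=\psi_t*\eta_t$. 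Combined with the scaling identity $\Phi_s*\psi_t=(\Phi_{s/t}*\psi)_t$, this gives, with $\Lambda^a:=\Phi_a*\psi$,
\[
\Phi_s*\psi_t*f=(\Lambda^{s/t})_t*(\eta_t*f),\qquad s,t>0.
\]

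The one genuinely non-routine step is to prove that for every $N>0$,
\[
K_N:=\sup_{a>0}\int_{\Bbb R^n}|\Lambda^a(w)|(1+|w|)^N\,dw<\infty.
\]
For $0<a\le1$ this follows because $\Lambda^a\to\psi$ in $\mathscr S(\Bbb R^n)$ as $a\to0^+$, so $\{\Lambda^a:0<a\le1\}$ is bounded in $\mathscr S$. For $a\ge1$ I would pass to the Fourier side: $\widehat{\Lambda^a}(\xi)=\hat\Phi(a\xi)\hat\psi(\xi)$ is supported in the fixed annulus $\{1\le|\xi|\le2\}$, which is bounded away from the origin, so $|\hat\Phi(a\xi)|$ and all its $\xi$-derivatives are $\le C_M(1+a)^{-M}$ there for every $M$; hence $\Lambda^a$ is Schwartz with all seminorms $\le C_M(1+a)^{-M}$, which more than suffices. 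Granting $K_N<\infty$, the standard majorization $|(\Lambda^{s/t})_t*h(x)|\le h^{**}_{N,t^{-1}}(x)\int|(\Lambda^{s/t})_t(z)|(1+|z|/t)^N\,dz\le K_N\,h^{**}_{N,t^{-1}}(x)$, applied with $h=\eta_t*f$, yields
\[
\sup_{s>0}|\Phi_s*\psi_t*f(x)|\le K_N\,E(\eta,f)(\cdot,t)^{**}_{N,t^{-1}}(x),\qquad t>0.
\]

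Next I would note that $\eta$ satisfies every hypothesis imposed on $\varphi$ in Lemma \ref{L2.4}: being in $\mathscr S(\Bbb R^n)$ it fulfills \eqref{alpha}; since $\hat\eta$ vanishes near the origin it fulfills \eqref{beta}, and \eqref{nondegeneracy} (with $M=1$) holds because $\hat\eta=1$ on $\{1\le|\xi|\le2\}$; and because a Schwartz function lies in every class $B^l_\tau$, Lemma \ref{L2.8}(3) supplies \eqref{ineq5} and \eqref{ineq5+} with $L=N$ for every positive integer $N$. So I would fix a positive integer $N>\max(n/p,n/q)$, put $r=n/N$ (so that $0<r<\min(p,q)$), and invoke Lemma \ref{L2.4} for $\eta$:
\[
\int_0^\infty E(\eta,f)(\cdot,t)^{**}_{N,t^{-1}}(x)^q\,\frac{dt}{t}\le C\int_0^\infty M(|f*\eta_t|^r)(x)^{q/r}\,\frac{dt}{t}.
\]

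Finally, combining the last two displays, taking $L^p$ norms, rewriting the right-hand side as $C\bigl\|(\int_0^\infty M(|f*\eta_t|^r)^{q/r}\,dt/t)^{r/q}\bigr\|_{p/r}^{1/r}$, and applying Lemma \ref{L2.6} with $\nu=p/r>1$, $\mu=q/r>1$, $w\equiv1\in A_{p/r}$ and $E(x,t)=|f*\eta_t(x)|^r$ (which is admissible since $f\in\mathscr S(\Bbb R^n)$, and using $(p/r)/(q/r)=p/q$), I obtain
\[
\left\|\left(\int_0^\infty\sup_{s>0}|\Phi_s*\psi_t*f|^q\,\frac{dt}{t}\right)^{1/q}\right\|_p\le C\left\|\left(\int_0^\infty|f*\eta_t|^q\,\frac{dt}{t}\right)^{1/q}\right\|_p,
\]
which is the claim. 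The main obstacle, to repeat, is the uniform-in-$a$ decay bound $K_N<\infty$ of the second step; everything after that is bookkeeping with the tools of Section 2, and the same scheme, using the weighted forms of Lemmas \ref{L2.4} and \ref{L2.6}, would give a weighted version as well.
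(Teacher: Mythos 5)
Your proposal is correct and follows essentially the same route as the paper: the identity $\hat\Phi(s\xi)\hat\psi(t\xi)=\hat\Phi(s\xi)\hat\psi(t\xi)\hat\eta(t\xi)$, the uniform boundedness of $\{\Phi_{u}*\psi\}_{u>0}$ in $\mathscr S(\Bbb R^n)$ to majorize $\sup_{s>0}|\Phi_s*\psi_t*f|$ by $C_N\,(f*\eta_t)^{**}_{N,t^{-1}}$, and then Lemma \ref{L2.4} (with $\eta$ in place of $\varphi$) followed by Lemma \ref{L2.6}. Your verification of the uniform Schwartz-seminorm bound for $\Lambda^a=\Phi_a*\psi$ is simply a more explicit version of the paper's one-line observation, so there is nothing to change.
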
 
\begin{proof} 
We note that $\hat{\Phi}(s\xi)\hat{\psi}(t\xi)=\hat{\Phi}(s\xi)\hat{\psi}(t\xi)
\hat{\eta}(t\xi)$. Thus we have 
\begin{align*} 
|\Phi_s*\psi_t*f(x)|&\leq (f*\eta_t)^{**}_{N, t^{-1}}(x)
\int_{\Bbb R^n}|\Phi_s*\psi_t(w)|(1+t^{-1}|w|)^N\, dw 
\\ 
&= (f*\eta_t)^{**}_{N, t^{-1}}(x) 
\int_{\Bbb R^n}|\Phi_{s/t}*\psi(w)|(1+|w|)^N\, dw 
\\ 
&\leq C_N (f*\eta_t)^{**}_{N, t^{-1}}(x)  
\end{align*} 
for any $N>0$, 
with a positive constant $C_N$ independent of $s, t$. The last inequality 
follows from  the observation that 
$\Phi_{s/t}*\psi$, $s, t>0$, belongs to a bounded subset of the 
topological vector space $\mathscr S(\Bbb R^n)$, 
since   $\mathscr F(\Phi_{u}*\psi)(\xi) 
=\hat{\Phi}(u\xi)\hat{\psi}(\xi)$, $u>0$, and $\hat{\psi}(\xi)$  
is supported on $\{1\leq |\xi|\leq 2\}$.   
Therefore, we have 
\begin{equation}\label{ineq3.1}  
\left(\int_0^\infty\sup_{s>0}|\Phi_s*\psi_t*f(x)|^q\, \frac{dt}{t}
\right)^{1/q}
\leq C\left(\int_0^\infty|(f*\eta_t)^{**}_{N, t^{-1}}(x)|^q\, 
\frac{dt}{t}\right)^{1/q}.  
\end{equation}
Thus \eqref{ineq3.1} and Lemma \ref{L2.4} with $\eta$ in place of $\varphi$ 
imply 
\begin{equation*} 
\left(\int_0^\infty\sup_{s>0}|\Phi_s*\psi_t*f(x)|^q\, \frac{dt}{t}
\right)^{1/q}
\leq C\left(\int_0^\infty M(|f*\eta_t|^r)(x)(x)^{q/r}
\, \frac{dt}{t}\right)^{1/q}, 
\end{equation*} 
with $N=n/r$.  By this and Lemma \ref{L2.6}, the conclusion follows 
as in \eqref{ineq11}.   
\end{proof} 
\par 
 We also use the following to prove Corollary \ref{C3.1}. 
\begin{lemma}\label{L3.2} 
Let $\hat{\psi}\in \mathscr S(\Bbb R^n)$ be a radial function supported on 
$\{1\leq |\xi|\leq 2\}$ such that 
$$\int_0^\infty|\hat{\psi}(t\xi)|^2\, \frac{dt}{t}=1 \quad \text{for all 
$\xi\neq 0$.} 
$$  
Let  $f\in H^p(\Bbb R^n)\cap \mathscr S(\Bbb R^n)$, $0<p\leq 1$, and 
put $E(y,t)=f*\psi_t(y)$.  
Then $E$ is in $H^p_{\mathscr H}(\Bbb R^n)$  and we have 
\begin{equation*}\label{}  
\|f\|_{H^p}\leq   C\|E\|_{H^p_{\mathscr H}}.     
\end{equation*} 
\end{lemma}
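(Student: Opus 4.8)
The plan is to turn the normalization of $\psi$ into a Calder\'on-type reproducing formula, feed the resulting identity through the atomic decomposition of $H^p_{\mathscr H}$ (Lemma \ref{L3.4}), and thereby reduce the assertion to a molecular estimate in $H^p(\Bbb R^n)$. The main obstacle will be the pointwise decay estimate showing that the image of an atom under the synthesis operator is a molecule.

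\textit{Step 1 (the reproducing formula and $E\in H^p_{\mathscr H}$).} Since $\hat\psi$ is radial and supported in $\{1\le|\xi|\le2\}$, $\psi\in\mathscr S(\Bbb R^n)$; writing $\widetilde\psi$ for the function with $\widehat{\widetilde\psi}=\overline{\hat\psi}$ (so $\widetilde\psi=\psi$ when $\psi$ is real and even), the hypothesis $\int_0^\infty|\hat\psi(t\xi)|^2\,dt/t=1$ gives, for $f\in\mathscr S(\Bbb R^n)$,
$$f=\int_0^\infty \widetilde\psi_t*\psi_t*f\,\frac{dt}{t}=\int_0^\infty \widetilde\psi_t*E^t\,\frac{dt}{t},$$
the integral converging in $L^2$ and in $\mathscr S'$. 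By Plancherel and the same normalization $\|E\|_{2,\mathscr H}=\|f\|_2<\infty$, so $E\in L^2_{\mathscr H}(\Bbb R^n)$. To see $E\in H^p_{\mathscr H}(\Bbb R^n)$, pick $\eta$ as in Lemma \ref{L3.5}, so that $\hat\psi(t\xi)=\hat\psi(t\xi)\hat\eta(t\xi)$; then $\varphi_s*E^t=\varphi_s*\psi_t*\eta_t*f$, where $\varphi\in\mathscr S(\Bbb R^n)$, $\int\varphi=1$, is the function entering the definition of $H^p_{\mathscr H}$, and the computation in the proof of Lemma \ref{L3.5} gives $|\varphi_s*E^t(x)|\le C_N (f*\eta_t)^{**}_{N,t^{-1}}(x)$ uniformly in $s>0$, whence $E^*(x)\le C_N\bigl(\int_0^\infty (f*\eta_t)^{**}_{N,t^{-1}}(x)^2\,dt/t\bigr)^{1/2}$. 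Since $\eta\in\mathscr S(\Bbb R^n)$ with $\hat\eta$ supported away from the origin, $\eta$ satisfies the hypotheses of Lemma \ref{L2.4} for every integer $N$; choosing $N>\max(n/2,n/p)$ and applying Lemmas \ref{L2.4} and \ref{L2.6} exactly as in \eqref{ineq11}, and then the classical inequality $\|g_\eta(f)\|_p\le C\|f\|_{H^p}$, we obtain $\|E^*\|_p\le C\|f\|_{H^p}<\infty$. Thus $E\in H^p_{\mathscr H}$ and Lemma \ref{L3.4} applies.

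\textit{Step 2 (reduction via atomic decomposition).} Write $E=\sum_k\lambda_k a_k$ with $\sum_k\lambda_k^p\le C\|E\|_{H^p_{\mathscr H}}^p$, the series converging in $L^2_{\mathscr H}$. The synthesis operator $T(h)=\int_0^\infty\widetilde\psi_t*h^t\,dt/t$ maps $L^2_{\mathscr H}$ boundedly into $L^2$ (by Plancherel, Cauchy--Schwarz in $t$, and the support and normalization of $\hat\psi$ one gets $\|T(h)\|_2\le\|h\|_{2,\mathscr H}$), and $T(E)=f$ by Step 1; hence $f=\sum_k\lambda_k b_k$ in $L^2$, hence in $\mathscr S'$, with $b_k=T(a_k)$. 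The claim is that each $b_k$ is, up to a fixed multiplicative constant, a molecule for $H^p(\Bbb R^n)$ centered at the cube $Q_k$ attached to $a_k$. Granting this, $\|b_k\|_{H^p}\le C$ uniformly, and by the $p$-subadditivity of $\|\cdot\|_{H^p}^p$ (which also forces the $H^p$-series to sum to $f$)
$$\|f\|_{H^p}^p\le\sum_k\lambda_k^p\|b_k\|_{H^p}^p\le C\sum_k\lambda_k^p\le C\|E\|_{H^p_{\mathscr H}}^p,$$
which is the desired inequality.

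\textit{Step 3 (the molecular estimate).} Let $a$ be an atom attached to a cube $Q$ with center $x_Q$ and side $\ell$, and $b=T(a)$. The $L^2$ size bound $\|b\|_2\le\|a\|_{2,\mathscr H}\le|Q|^{1/2-1/p}$ is immediate from the boundedness of $T$ and properties (i)--(ii) of $a$. For the cancellation, $\hat b(\xi)=\int_0^\infty\widehat{\widetilde\psi}(t\xi)\,\widehat{a^t}(\xi)\,dt/t$; since $\widehat{\widetilde\psi}$ vanishes identically near $\xi=0$ and $a(\cdot,t)$ is compactly supported, every $\xi$-derivative of $\hat b$ vanishes at the origin (differentiation under the integral sign being justified by Cauchy--Schwarz in $t$ against $(\int_0^\infty|a(y,t)|^2\,dt/t)^{1/2}\le|Q|^{-1/p}$), so, in view of the decay proved next, $\int b(x)x^\gamma\,dx=0$ for $|\gamma|\le[n(1/p-1)]$. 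The decay estimate $|b(x)|\le C|Q|^{-1/p}(1+|x-x_Q|/\ell)^{-n/p-\delta}$ for some $\delta>0$ is the genuinely computational point and the main obstacle: one splits $b(x)=\bigl(\int_0^\ell+\int_\ell^\infty\bigr)\widetilde\psi_t*a^t(x)\,dt/t$; for $t\le\ell$ and $|x-x_Q|\ge2\sqrt n\,\ell$ the rapid decay of $\widetilde\psi\in\mathscr S$ together with $|x-y|\gtrsim|x-x_Q|$ for $y\in Q$ yields arbitrarily fast decay in $|x-x_Q|/\ell$, while for $t\ge\ell$ one Taylor-expands $\widetilde\psi_t$ to order $[n(1/p-1)]$ and uses the vanishing moments (iii) of $a(\cdot,t)$ to extract a factor $(\ell/t)^{[n(1/p-1)]+1}$, in each range applying Cauchy--Schwarz in $t$ against $(\int_0^\infty|a(y,t)|^2\,dt/t)^{1/2}\le|Q|^{-1/p}$; since $[n(1/p-1)]+1>n(1/p-1)$ the $t$-integral converges with a positive power of $\ell/|x-x_Q|$ to spare. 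Finally the molecular characterization of $H^p(\Bbb R^n)$ (see \cite{GR}, \cite{ST}) gives $\|b\|_{H^p}\le C$ uniformly, completing the proof.
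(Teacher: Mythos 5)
Your argument is correct and follows the same overall strategy as the paper: both reduce the inequality to the atomic decomposition of $E$ in $H^p_{\mathscr H}$ (Lemma \ref{L3.4}) and to a uniform $H^p$ bound for the synthesis operator applied to a single $(p,\infty)$ atom, and both extract the decay from the same computation (Taylor expansion of the rescaled kernel against the moment conditions of $a(\cdot,t)$, Cauchy--Schwarz in $t$ against $(\int_0^\infty|a(y,t)|^2\,dt/t)^{1/2}\le|Q|^{-1/p}$). The genuine difference is in which characterization of $H^p$ you invoke at the end. The paper (Lemma \ref{L3.3}) estimates $\sup_{s>0}|\Phi_s*F_\psi^\epsilon(a)(x)|$ directly, i.e.\ it Taylor-expands the already-smoothed kernel $\Phi_{s/t}*\psi$ and bounds the grand maximal function of the image of the atom; this avoids any discussion of the pointwise size or the vanishing moments of $T(a)$ itself, at the cost of tracking uniformity in $s$ (via the observation that $\Phi_u*\psi$ lies in a bounded subset of $\mathscr S$) and of working with the truncations $F^\epsilon$ and a $\liminf$ as $\epsilon\to0$. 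You instead show $b=T(a)$ is a molecule and appeal to the molecular theory; this lets you work with the untruncated operator but obliges you to prove the extra moment condition $\int b(x)x^\gamma\,dx=0$. Your Step 1 for $E\in H^p_{\mathscr H}$ also differs from the paper's (which adapts the proof of Lemma \ref{L3.3} using the atomic decomposition of $f$ in $H^p$), but your route through Lemmas \ref{L2.4} and \ref{L2.6} plus the classical bound $\|g_\eta(f)\|_p\le C\|f\|_{H^p}$ is legitimate and arguably more in the spirit of Section 2.

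One point deserves a cleaner justification: the vanishing moments of $b$. Differentiating $\hat b(\xi)=\int_0^\infty\overline{\hat\psi(t\xi)}\widehat{a^t}(\xi)\,dt/t$ under the integral sign produces factors of $t^{|\alpha|}$ against $(\partial^\alpha\overline{\hat\psi})(t\xi)$, which live on $t\sim|\xi|^{-1}$ and so are not uniformly integrable near $\xi=0$; the claim that ``every $\xi$-derivative of $\hat b$ vanishes at the origin'' is not immediate from Cauchy--Schwarz in $t$ alone. The fix is elementary and you already have the needed ingredient: once the decay $|b(x)|\le C|Q|^{-1/p}\ell^{n+M+1}|x-x_Q|^{-n-M-1}$ (with $M=[n(1/p-1)]$) is in hand, $\int|b(x)|\,|x|^{M}\,dx<\infty$, and Fubini gives
\begin{equation*}
\int b(x)x^\gamma\,dx=\int_0^\infty\int \bigl(\widetilde\psi_t*a^t\bigr)(x)\,x^\gamma\,dx\,\frac{dt}{t}
=\int_0^\infty\sum_{\alpha+\beta=\gamma}\binom{\gamma}{\alpha}\Bigl(\int\widetilde\psi_t(z)z^\alpha\,dz\Bigr)\Bigl(\int a(y,t)y^\beta\,dy\Bigr)\frac{dt}{t}=0,
\end{equation*}
since every moment of $\widetilde\psi$ vanishes ($\widehat{\widetilde\psi}\equiv0$ near the origin). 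With that repair the molecular estimate, and hence the proof, is complete.
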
 
Let $\psi$ be a function in $L^1(\Bbb R^n)$ satisfying \eqref{cancell}. 
Suppose that $h\in L^2_{\mathscr H}$. Let 
$h_{(\epsilon)}(y,t)=h(y,t)\chi_{(\epsilon, \epsilon^{-1})}(t)$, 
$0<\epsilon<1$,  where 
$\chi_S$ denotes the characteristic function of a set $S$. 
Put 
\begin{equation*}
F_\psi^\epsilon(h)(x)=\int_0^\infty\int_{\Bbb R^n} 
\psi_t(x-y)h_{(\epsilon)}(y,t)\,dy\,\frac{dt}{t}.  
\end{equation*} 
\par 
To prove Lemma \ref{L3.2} we apply the following.  
\begin{lemma}\label{L3.3} Let $0 <p\leq 1$. 
 Suppose that $\psi\in \mathscr S(\Bbb R^n)$ and $\supp \hat{\psi}\subset 
 \{1\leq |\xi|\leq 2\}$. Then 
$$\sup_{\epsilon\in (0,1)}
\|F_\psi^\epsilon(h)\|_{H^p}\leq C\|h\|_{H^p_{\mathscr H}}.   $$  
\end{lemma}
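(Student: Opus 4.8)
The plan is to reduce to a single atom by means of the atomic decomposition of Lemma~\ref{L3.4} together with the elementary inequality $\|\sum_k g_k\|_{H^p}^p\le\sum_k\|g_k\|_{H^p}^p$, valid for $0<p\le1$. Write $h=\sum_k\lambda_k a_k$ with $\sum_k\lambda_k^p\le C\|h\|_{H^p_{\mathscr H}}^p$ and each $a_k$ a $(p,\infty)$ atom of $H^p_{\mathscr H}(\Bbb R^n)$. Since $\hat\psi$ is supported in $\{1\le|\xi|\le2\}$ one has $\int_0^\infty|\hat\psi(t\xi)|^2\,dt/t\le C$ for all $\xi\ne0$, and then Plancherel combined with the Cauchy--Schwarz inequality in $t$ applied to $\mathscr F(F_\psi^\epsilon(h))(\xi)=\int_0^\infty\hat\psi(t\xi)\,\mathscr F(h_{(\epsilon)}(\cdot,t))(\xi)\,dt/t$ yields $\|F_\psi^\epsilon(h)\|_2\le C\|h\|_{2,\mathscr H}$, uniformly in $\epsilon\in(0,1)$. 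Consequently $\sum_k\lambda_k F_\psi^\epsilon(a_k)$ converges in $L^2(\Bbb R^n)$ to $F_\psi^\epsilon(h)$, and it suffices to prove $\|F_\psi^\epsilon(a)\|_{H^p}\le C$ with $C$ independent of $\epsilon$ and of the atom $a$; the return to $h$ is then routine (completeness of $H^p$ and Fatou's lemma, the $H^p$- and $L^2$-limits coinciding in $\mathscr S'$).

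Fix such an atom $a$, supported in the $x$-variable, uniformly in $t$, on a cube $Q$ with centre $y_Q$ and side length $\ell$, and satisfying conditions (i)--(iii); put $d=[n(1/p-1)]$ and let $Q^*=2\sqrt n\,Q$. Fixing $\Phi\in\mathscr S(\Bbb R^n)$ with $\int\Phi=1$ and writing $g^*(x)=\sup_{s>0}|\Phi_s*g(x)|$, I would estimate $\int_{\Bbb R^n}\big(F_\psi^\epsilon(a)\big)^*(x)^p\,dx$ by splitting over $Q^*$ and over $\Bbb R^n\setminus Q^*$. On $Q^*$: from the $L^2$ bound above and (i)--(ii) one has $\|F_\psi^\epsilon(a)\|_2\le C\|a\|_{2,\mathscr H}\le C|Q|^{1/2-1/p}$; since $g\mapsto g^*$ is bounded on $L^2$, Hölder's inequality over $Q^*$ gives $\int_{Q^*}\big(F_\psi^\epsilon(a)\big)^*(x)^p\,dx\le|Q^*|^{1-p/2}\big\|(F_\psi^\epsilon(a))^*\big\|_2^p\le C$.

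Away from $Q$: for $x\notin Q^*$ and $s>0$, write $\Phi_s*F_\psi^\epsilon(a)(x)=\int_{\Bbb R^n}\int_0^\infty(\Phi_s*\psi_t)(x-y)\,a_{(\epsilon)}(y,t)\,\frac{dt}{t}\,dy$, subtract from $y\mapsto(\Phi_s*\psi_t)(x-y)$ its Taylor polynomial of degree $d$ at $y_Q$ (permissible by (iii)), and use that $\Phi_s*\psi_t(z)=t^{-n}(\Phi_{s/t}*\psi)(z/t)$, where $\{\Phi_{s/t}*\psi\}_{s,t>0}$ is a bounded subset of $\mathscr S(\Bbb R^n)$ — the same observation made in the proof of Lemma~\ref{L3.5}, using that $\hat\psi$ is supported in the annulus. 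This furnishes $|\partial_y^\gamma(\Phi_s*\psi_t)(x-y)|\le C_M\,t^{-n-|\gamma|}(1+|x-y_Q|/t)^{-M}$ for $y\in Q$, every $M$, uniformly in $s$. Carrying out the $dt/t$-integral over $(\epsilon,\epsilon^{-1})$ — which only decreases the integral — and Cauchy--Schwarz in $t$ against $\|a^y\|_{\mathscr H}\le|Q|^{-1/p}$ then produces
$$\sup_{s>0}\big|\Phi_s*F_\psi^\epsilon(a)(x)\big|\le C\,\ell^{\,d+1+n(1-1/p)}\,|x-y_Q|^{-n-d-1},\qquad x\notin Q^*,$$
uniformly in $\epsilon$. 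Since $d=[n(1/p-1)]$ forces $d+1>n(1/p-1)$, i.e. $n/p<n+d+1$, raising to the $p$th power and integrating over $\{|x-y_Q|>c\ell\}$ yields a constant depending only on $p$ and $n$; adding the two contributions finishes the estimate.

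The main obstacle is the estimate away from $Q$: one must extract from the composed kernel $\Phi_s*\psi_t$, after the Taylor subtraction, Schwartz-type decay uniform in $s$ and in $\epsilon$, and then arrange the $t$-integration so that the truncation to $(\epsilon,\epsilon^{-1})$ drops out of the final constant. The annular support of $\hat\psi$ is exactly what makes both steps work — it bounds $\int_0^\infty|\hat\psi(t\xi)|^2\,dt/t$ (already needed for the $L^2$ step) and forces $\Phi_{s/t}*\psi$ to range over a bounded set in $\mathscr S(\Bbb R^n)$ (needed for the kernel bounds); without it the $t$-integral defining $F_\psi^\epsilon$ would not be controllable as $\epsilon\to0$.
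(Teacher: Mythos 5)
Your proposal is correct and follows essentially the same route as the paper's proof: reduce to a single $(p,\infty)$ atom via Lemma \ref{L3.4}, use the uniform $L^2$ bound (from $\int_0^\infty|\hat\psi(t\xi)|^2\,dt/t\le C$) plus H\"older on the enlarged cube, and away from the cube subtract the degree-$[n(1/p-1)]$ Taylor polynomial of $\Phi_s*\psi_t=(\Phi_{s/t}*\psi)_t$ (with $\{\Phi_{s/t}*\psi\}$ bounded in $\mathscr S$) and apply Cauchy--Schwarz in $t$ against the atom's $\mathscr H$-norm. The exponents and the uniformity in $\epsilon$ come out exactly as in the paper, so no further comment is needed.
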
 
\begin{proof} 
Let $a$ be a $(p, \infty)$ atom in  $H^p_{\mathscr H}(\Bbb R^n)$ with support 
in the cube $Q$ of the definition of the atom. 
We denote by $y_0$ the center of $Q$.  Let $\widetilde{Q}$ be a concentric enlargement of $Q$ such that $2|y-y_0|<|x-y_0|$  
if $y\in Q$ and $x\in \Bbb R^n\setminus \widetilde{Q}$.  
Let $\Phi$ be a non-negative $C^\infty$ function on $\Bbb R^n$ 
supported on $\{|x|<1\}$ which satisfies $\int \Phi(x)\, dx=1$. 
Let $\Psi_{s,t}=\Phi_s*\psi_t$, $s, t>0$.  Then  
$\Psi_{s,t}=(\Phi_{s/t}*\psi)_t$ and $\Phi_{u}*\psi$, $u>0$,  belongs to 
a bounded subset of the topological vector space $\mathscr S(\Bbb R^n)$, 
as in the proof of Lemma \ref{L3.5}.  
\par 
Let $P_x(y,y_0)$ be the Taylor polynomial in $y$ of order $M=[n(1/p -1)]$ 
at $y_0$ for $\Phi_{s/t}*\psi(x-y)$. Then,  if 
$|x-y_0|>2|y-y_0|$, we see that 
$$|\Phi_{s/t}*\psi(x-y)-P_x(y,y_0)|\leq C|y-y_0|^{M+1}
(1+|x-y_0|)^{-L},   $$  
where $L>n+M+1$ and the constant $C$ is independent of $s, t, x, y, y_0$, 
and hence  
$$|\Psi_{s,t}(x-y)-t^{-n}P_{x/t}(y/t,y_0/t)|\leq Ct^{-n-M-1}|y-y_0|^{M+1}
(1+|x-y_0|/t)^{-L}.   $$  
Therefore, by the properties of an atom and the Schwarz 
inequality, for $x\in \Bbb R^n\setminus \widetilde{Q}$ we have  
\begin{align*} 
&\left|\Phi_s*F_\psi^\epsilon(a)(x)\right|=\left|\iint_{\Bbb R^n \times 
(0,\infty)}   
\left(\Psi_{s,t}(x-y)-t^{-n}P_{x/t}(y/t,y_0/t) \right)
a_{(\epsilon)}(y,t)\, dy\, \frac{dt}{t}\right|  
\\ 
&\leq \int_Q\left(\int_0^\infty\left|\Psi_{s,t}(x-y)-t^{-n}P_{x/t}(y/t,y_0/t) 
\right|^2 \, \frac{dt}{t}\right)^{1/2}\left(\int_0^\infty|a(y,t)|^2
\, \frac{dt}{t}\right)^{1/2}\, dy
\\ 
&\leq C|Q|^{-1/p}\int_{Q}\left(\int_0^\infty
\left|\Psi_{s,t}(x-y)-t^{-n}P_{x/t}(y/t,y_0/t) \right|^2 \, 
\frac{dt}{t}\right)^{1/2}\, dy 
\\ 
&\leq C|Q|^{-1/p}\int_{Q}|y-y_0|^{M+1} |x-y_0|^{-n-M-1}
\, dy 
\\ 
&\leq C|Q|^{-1/p+ 1+(M+1)/n}|x-y_0|^{-n-M-1}. 
\end{align*}  
We note that $p>n/(n+M+1)$. Thus    
\begin{multline}\label{a1}
\int_{\Bbb R^n\setminus \widetilde{Q}} \sup_{s>0}
\left|\Phi_s*F_\psi^\epsilon(a)(x)
\right|^p\, dx 
\\ 
\leq C|Q|^{-1+ p+p(M+1)/n}\int_{\Bbb R^n\setminus \widetilde{Q}}
|x-y_0|^{-p(n+M+1)} \leq C. 
\end{multline} 
\par 
Since $\int_0^\infty|\hat{\psi}(t\xi)|^2\, dt/t\leq C$,  by duality 
we have  
$$\sup_{\epsilon\in(0,1)}\|F_\psi^\epsilon(h)\|_2 
\leq C\|h\|_{L^2_\mathscr H}, \quad h\in L^2_{\mathscr H}(\Bbb R^n).$$ 
Thus, applying H\"{o}lder's inequality, by the properties (i), (ii) of $a$ 
 we see that 
\begin{align}\label{a2}
\int_{\widetilde{Q}} \sup_{s>0}\left|\Phi_s*F_\psi^\epsilon(a)(x)
\right|^p\, dx 
&\leq C|Q|^{1-p/2}\left(\int_{\widetilde{Q}}|M(F_\psi^\epsilon(a))(x)|^2
\, dx \right)^{p/2} 
\\ 
&\leq C|Q|^{1-p/2}\left(\int_{Q}\int_0^\infty |a(y,t)|^2\, \frac{dt}{t} \, dy 
\right)^{p/2}    \notag 
\\ 
&\leq C.    \notag
\end{align} 
The estimates \eqref{a1} and \eqref{a2} imply 
\begin{equation}\label{a3}
\int_{\Bbb R^n} \sup_{s>0}\left|\Phi_s*F_\psi^\epsilon(a)(x)\right|^p
\, dx \leq C. 
\end{equation} 
Using Lemma \ref{L3.4} and \eqref{a3},  we have  
 \begin{equation*}\label{h1}
\int_{\Bbb R^n} \sup_{s>0}\left|\Phi_s*F_\psi^\epsilon(h)(x)\right|^p
\, dx \leq C\|h\|_{H^p_{\mathscr H}}^p.   
\end{equation*} 
This completes the proof. 
\end{proof} 

\begin{proof}[Proof of Lemma $\ref{L3.2}$.]  
The fact that $E\in H^p_{\mathscr H}(\Bbb R^n)$ can be proved similarly 
to the proof of Lemma \ref{L3.3} by applying the atomic decomposition for 
$H^p(\Bbb R^n)$ (see \cite[Lemma 3.6]{U}). 
\par  
We write  
\begin{equation*} 
F^\epsilon_{\widetilde{\bar{\psi}}}(E)(x)=\int_\epsilon^{\epsilon^{-1}}
\int_{\Bbb R^n} \psi_t*f(y)\bar{\psi}_t(y-x)\,dy\,\frac{dt}{t} 
= \int_{\Bbb R^n} \Psi^{(\epsilon)}(x-z)f(z)\, dz, 
\end{equation*} 
where $\bar{\psi}$ denotes the complex conjugate, $\widetilde{g}(x)=g(-x)$ and 
\begin{equation*} 
\Psi^{(\epsilon)}(x)=
\int_\epsilon^{\epsilon^{-1}}\int_{\Bbb R^n}  
\psi_t(x+y)\bar{\psi}_t(y)\,dy\,\frac{dt}{t}. 
\end{equation*} 
We note that 
\begin{equation*} 
\widehat{\Psi^{(\epsilon)}}(\xi)
=\int_\epsilon^{\epsilon^{-1}}\hat{\psi}(t\xi)\widehat{\bar{\psi}}(-t\xi)
\,\frac{dt}{t}=\int_\epsilon^{\epsilon^{-1}}|\hat{\psi}(t\xi)|^2
\,\frac{dt}{t}. 
\end{equation*} 
From this and Lemma \ref{L3.3} we have 
\begin{equation*}\label{}  
\|f\|_{H^p}\leq  
 C\liminf_{\epsilon\to 0}\|F^\epsilon_{\widetilde{\bar{\psi}}}(E)\|_{H^p}
\leq C\|E\|_{H^p_{\mathscr H}}.   
\end{equation*} 
\end{proof} 
\begin{proof}[Proof of Corollary $\ref{C3.1}$.] 
We take a function  $\eta$  as in Lemma \ref{L3.5}.  
Then by Lemma \ref{L3.5} with $q=2$ and Lemma \ref{L3.2},
it follows that  
$$\|f\|_{H^p}\leq  C\left\|g_\eta(f)\right\|_p 
  $$  
  for $f\in H^p(\Bbb R^n)\cap \mathscr S(\Bbb R^n)$. 
If we use this and Theorem \ref{C2.10} with $q=2$, $w=1$ and with $\eta$ 
in place of $\psi$, we can reach the conclusion of Corollary \ref{C3.1}. 
\end{proof} 
\par 
We can also prove discrete parameter versions of 
Proposition \ref{T2.3} and Corollary \ref{C3.1} by analogous methods. 
\begin{proposition}\label{P3.6} 
Let $N>0$, $n/N<p, q<\infty$.   
Suppose that $w\in A_{pN/n}$ and that $\varphi$ and $\psi$ fulfill 
the hypotheses of Proposition 
$\ref{T2.3}$ with $N$.  Then, for $f\in \mathscr S(\Bbb R^n)$ we have 
$$\left\|\left(\sum_{j=-\infty}^\infty|f*\psi_{b^j}|^q\right)^{1/q}
\right\|_{p,w} 
\leq 
C\left\|\left(\sum_{j=-\infty}^\infty|f*\varphi_{b^j}|^q
\right)^{1/q}\right\|_{p,w}.   
$$
\end{proposition}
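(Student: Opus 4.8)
The plan is to transcribe the proof of Proposition \ref{T2.3} almost verbatim, replacing every continuous average $\int_0^\infty(\cdot)\,dt/t$ by the sum $\sum_{i\in\Bbb Z}(\cdot)$ with $t=b^i$, and replacing the vector valued maximal inequality of Lemma \ref{L2.6} by its $\ell^q$-valued (discrete) analogue, the classical weighted Fefferman-Stein inequality
\[
\left\|\Bigl(\sum_{i\in\Bbb Z}M(g_i)^\mu\Bigr)^{1/\mu}\right\|_{\nu,w}
\le C\left\|\Bigl(\sum_{i\in\Bbb Z}|g_i|^\mu\Bigr)^{1/\mu}\right\|_{\nu,w},
\qquad 1<\mu,\nu<\infty,\ w\in A_\nu .
\]
First I would evaluate the pointwise estimates \eqref{ineq2}, \eqref{ineq3} of Lemma \ref{L2.2} at $t=b^i$ for every $i\in\Bbb Z$ (they are valid for all $t>0$), obtaining in particular
\[
|f*\psi_{b^i}(x)|\le C\sum_{j:\,b^j\le A}C(\psi,j,N)\,E(\varphi,f)(\cdot,b^{i+j})^{**}_{N,(b^{i+j})^{-1}}(x)+CD(\Theta,A,N)\,E(\varphi,f)(\cdot,b^i)^{**}_{N,b^{-i}}(x).
\]
Raising to the $q$-th power, inserting the summability factor $b^{-\tau c_qj}$ as in \eqref{ineq8} (with $c_q=1$ if $q>1$ and $c_q=0$ if $0<q\le1$), summing over $i\in\Bbb Z$, interchanging the two sums, and substituting $i\mapsto i-j$ in the first term yields the discrete form of \eqref{ineq6},
\[
\sum_{i\in\Bbb Z}|f*\psi_{b^i}(x)|^q\le C_q\left[\sum_{j:\,b^j\le A}C(\psi,j,N)^qb^{-\tau c_qj}+D(\Theta,A,N)^q\right]\sum_{i\in\Bbb Z}E(\varphi,f)(\cdot,b^i)^{**}_{N,b^{-i}}(x)^q,
\]
the bracket being finite by \eqref{ineq4}, \eqref{ineq4+} with $L=N$ once $\tau$ is small.

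Next I would prove the discrete counterpart of Lemma \ref{L2.4}, namely
\[
\sum_{i\in\Bbb Z}E(\varphi,f)(\cdot,b^i)^{**}_{N,b^{-i}}(x)^q\le C\sum_{i\in\Bbb Z}M(|f*\varphi_{b^i}|^r)(x)^{q/r},\qquad r=n/N,
\]
by repeating the proof of Lemma \ref{L2.4} with $t=b^i$: apply Lemma \ref{L2.5} to $F=f*\varphi_{b^i}$, then \eqref{ineq3} with $\psi=\partial_k\varphi$, $\Theta(\xi)=2\pi i\xi_k$, $A=1$, raise to the $q$-th power, sum over $i\in\Bbb Z$, interchange the sums and shift the index, and absorb the term $C\delta^q[\,\cdot\,]\sum_{i}E(\varphi,f)(\cdot,b^i)^{**}_{N,b^{-i}}(x)^q$ into the left side by choosing $\delta$ small. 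The coefficient sum is finite by \eqref{ineq5}, \eqref{ineq5+} with $L=N$, and the absorption is legitimate because $\sum_{i\in\Bbb Z}E(\varphi,f)(\cdot,b^i)^{**}_{N,b^{-i}}(x)^q<\infty$ for $f\in\mathscr S(\Bbb R^n)$: this follows from $E(\varphi,f)(\cdot,t)^{**}_{N,t^{-1}}(x)\le\|f*\varphi_t\|_\infty$ together with the decay of $\|f*\varphi_{b^i}\|_\infty$ as $b^i\to0$ and as $b^i\to\infty$, obtained from \eqref{beta} and \eqref{beta+} respectively by splitting the Fourier integral of $f*\varphi_{b^i}$ at $|\xi|=(b^i)^{-1}$.

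Finally, combining the two displays and then raising the quantity inside the norm to the power $r$ exactly as in \eqref{ineq11}, I would apply the discrete weighted Fefferman-Stein inequality above with $g_i=|f*\varphi_{b^i}|^r$, $\mu=q/r>1$, $\nu=p/r=pN/n>1$ and $w\in A_{pN/n}=A_\nu$; unwinding the exponents gives the asserted bound. I do not expect a genuine obstacle here: the whole argument is a line-by-line analogue of Section~2, and the only point calling for slightly more care than in the continuous case is the two-sided decay of $\|f*\varphi_{b^i}\|_\infty$ needed to justify the absorption step, which is already implicit in the proof of Lemma \ref{L2.4}.
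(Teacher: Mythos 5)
Your proposal is correct and is precisely the "analogous method" the paper invokes for Proposition \ref{P3.6}: the paper gives no separate proof, and the intended argument is exactly the line-by-line discretization you describe, with the dilation invariance $\int_0^\infty G(st)\,dt/t=\int_0^\infty G(t)\,dt/t$ replaced by the shift invariance $\sum_{i\in\Bbb Z}G(b^{i+j})=\sum_{i\in\Bbb Z}G(b^{i})$ and Lemma \ref{L2.6} replaced by the classical weighted $\ell^\mu$-valued Fefferman--Stein inequality (which is in fact the version proved in \cite{RRT}). Your extra care about the two-sided decay of $\|f*\varphi_{b^i}\|_\infty$ for the absorption step is exactly the finiteness argument already used (via \eqref{beta} and \eqref{beta+}) in the proof of Lemma \ref{L2.4}.
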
  

\begin{corollary}
Let $0<p\leq 1$ and $N>n/p$. Suppose that $\varphi$ fulfills the 
hypotheses of Corollary $\ref{C3.1}$ with $N$.  Then, for 
$f\in H^p(\Bbb R^n)\cap \mathscr S(\Bbb R^n)$ we have  
$$\|f\|_{H^p}\leq C\left\|\left(\sum_{j=-\infty}^\infty|f*\varphi_{b^j}|^2
\right)^{1/2}\right\|_p.  $$  
\end{corollary}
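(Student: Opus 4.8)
The plan is to repeat the proof of Corollary \ref{C3.1} step by step, replacing every continuous square function $\left(\int_0^\infty|\cdot|^2\,dt/t\right)^{1/2}$ by its discrete counterpart $\left(\sum_{j\in\Bbb Z}|\cdot|^2\right)^{1/2}$, the $\mathscr H$-valued Hardy space $H^p_{\mathscr H}(\Bbb R^n)$ by an $\ell^2$-valued Hardy space $H^p_{\ell^2}(\Bbb R^n)$ (whose elements are sequences $h=(h(\cdot,j))_{j\in\Bbb Z}$ in place of functions $h(\cdot,t)$, $t>0$), and Proposition \ref{T2.3}, Theorem \ref{C2.10} by Proposition \ref{P3.6} together with its consequence obtained from Lemma \ref{L2.8}. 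As in the proof of Corollary \ref{C3.1}, the first step is to reduce the asserted inequality to the model estimate
$$\|f\|_{H^p}\le C\left\|\Big(\sum_{j\in\Bbb Z}|f*\eta_{b^j}|^2\Big)^{1/2}\right\|_p,\qquad f\in H^p(\Bbb R^n)\cap\mathscr S(\Bbb R^n),$$
for a conveniently chosen radial $\eta\in\mathscr S(\Bbb R^n)$ with $\hat\eta$ supported in an annulus $\{a_1\le|\xi|\le a_2\}$, $0<a_1<a_2$, and identically $1$ on a smaller annulus; one then passes from $\eta$ to $\varphi$.

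For the model estimate I would set up discrete analogs of Lemmas \ref{L3.2}, \ref{L3.3} and \ref{L3.5}. Choose a radial $\psi_0\in\mathscr S(\Bbb R^n)$ with $\supp\hat{\psi_0}$ contained in an annulus adapted to $b$ (of multiplicative width at least $b^{-1}$, so that the dilates $\hat{\psi_0}(b^j\cdot)$, $j\in\Bbb Z$, cover $\Bbb R^n\setminus\{0\}$), normalized so that $\sum_{j\in\Bbb Z}|\hat{\psi_0}(b^j\xi)|^2=1$ for all $\xi\ne0$, and with $\hat\eta\equiv1$ on $\supp\hat{\psi_0}$. Put $E(y,j)=f*(\psi_0)_{b^j}(y)$. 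The discrete analog of Lemma \ref{L3.2} asserts that $E\in H^p_{\ell^2}(\Bbb R^n)$ and $\|f\|_{H^p}\le C\|E\|_{H^p_{\ell^2}}$; its proof is modeled on that of Lemma \ref{L3.2}, the discrete Calderón identity $\sum_j|\hat{\psi_0}(b^j\xi)|^2=1$ on $\Bbb R^n\setminus\{0\}$ (the value at $0$ being irrelevant since $\hat f(0)=0$ for $f\in H^p\cap\mathscr S$) replacing the continuous one, and the truncated operators $F^\epsilon_{\psi_0}(h)(x)=\sum_{|j|<\epsilon^{-1}}\int(\psi_0)_{b^j}(x-y)h(y,j)\,dy$ being shown to map $H^p_{\ell^2}(\Bbb R^n)$ into $H^p(\Bbb R^n)$ uniformly in $\epsilon$ by the discrete analog of Lemma \ref{L3.3}, whose proof transcribes the atomic estimates \eqref{a1}, \eqref{a2} verbatim with $\int_0^\infty\cdot\,dt/t$ replaced by $\sum_{j\in\Bbb Z}$ and Lemma \ref{L3.4} replaced by the corresponding atomic decomposition of $H^p_{\ell^2}(\Bbb R^n)$. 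Finally,
$$\|E\|_{H^p_{\ell^2}}=\left\|\sup_{s>0}\Big(\sum_{j\in\Bbb Z}|\Phi_s*(\psi_0)_{b^j}*f|^2\Big)^{1/2}\right\|_p\le\left\|\Big(\sum_{j\in\Bbb Z}\sup_{s>0}|\Phi_s*(\psi_0)_{b^j}*f|^2\Big)^{1/2}\right\|_p,$$
and the discrete analog of Lemma \ref{L3.5} bounds the last quantity by $C\big\|(\sum_j|f*\eta_{b^j}|^2)^{1/2}\big\|_p$: the pointwise inequality $|\Phi_s*(\psi_0)_{b^j}*f(x)|\le C_N(f*\eta_{b^j})^{**}_{N,b^{-j}}(x)$, uniform in $s>0$, holds exactly as in the proof of Lemma \ref{L3.5}, one then applies the discrete analog of Lemma \ref{L2.4} with $\eta$ in place of $\varphi$ (obtained by specializing \eqref{ineq8} to $t=b^j$ and summing over $j$, the shifted sums on the right being absorbed via the summability furnished by \eqref{ineq5}), and concludes with the classical $\ell^2$-valued Fefferman-Stein maximal inequality.

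To pass from $\eta$ to $\varphi$, observe that $\eta$ is related to $\varphi$ as in \eqref{nearorigin} with $\Theta\equiv0$ and any $A\ge1$ large enough that $r_2A^{-1}\le a_1$, and that $\eta\in B^{N+[n/2]+1}_\tau$ for every $\tau\ge0$ because $\eta\in\mathscr S(\Bbb R^n)$. Arguing as in the proofs of Theorems \ref{C2.9} and \ref{C2.10}, conditions \eqref{ineq4}, \eqref{ineq4+} with $L=N$ follow from parts (1), (2) of Lemma \ref{L2.8} (with $J=A$), and \eqref{ineq5}, \eqref{ineq5+} with $L=N$ follow from \eqref{alpha} and part (3) of Lemma \ref{L2.8} (with $J=1$, $\tau=\epsilon$); here we use that $\varphi\in B^{N+[n/2]+1}_{N+1+\epsilon}$ also gives $\hat\varphi\in C^{N+[n/2]+1}(\Bbb R^n\setminus\{0\})$. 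Hence Proposition \ref{P3.6} applies with $q=2$, $w\equiv1\in A_{pN/n}$ and $\psi=\eta$, yielding
$$\left\|\Big(\sum_{j\in\Bbb Z}|f*\eta_{b^j}|^2\Big)^{1/2}\right\|_p\le C\left\|\Big(\sum_{j\in\Bbb Z}|f*\varphi_{b^j}|^2\Big)^{1/2}\right\|_p,$$
which together with the model estimate gives the conclusion.

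The step I expect to require the most care is the discrete analog of Lemma \ref{L3.2}: one must introduce the $\ell^2$-valued Hardy space $H^p_{\ell^2}(\Bbb R^n)$ together with its atomic decomposition and check that the truncated reproducing operators $F^\epsilon_{\psi_0}$ carry $(p,\infty)$ atoms to functions with $H^p$ quasi-norm bounded uniformly in $\epsilon$, which forces $\supp\hat{\psi_0}$ to be an annulus chosen compatibly with $b$ so that the discrete Calderón identity can hold. Granting this, everything else is a routine transcription of the arguments of Section 3, with $\int_0^\infty\cdot\,dt/t$ replaced by $\sum_{j\in\Bbb Z}$ throughout.
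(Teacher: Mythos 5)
Your proposal is correct and follows exactly the route the paper intends: the paper offers no written proof of this corollary beyond the remark that it follows ``by analogous methods,'' meaning precisely the discretization you carry out --- replacing $\int_0^\infty(\cdot)\,dt/t$ by $\sum_{j\in\Bbb Z}$, $H^p_{\mathscr H}$ by $H^p_{\ell^2}$, and Theorem \ref{C2.10} by Proposition \ref{P3.6}, with the key observation that the lattice $\{b^m\}_{m\in\Bbb Z}$ is stable under the shifts $b^j\mapsto b^{j+k}$ occurring in \eqref{ineq8}. Your elaboration of the discrete Calder\'on identity and the $\ell^2$-valued atomic decomposition is the standard way to fill in those details, so there is nothing genuinely different to compare.
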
  
Also, from Proposition \ref{P3.6} we have discrete parameter analogues of  
Theorems \ref{C2.9} and \ref{C2.10}. 

\section{Proofs of Lemmas $\ref{L2.1}$ and $\ref{L2.5}$ }  

In this section we give proofs of Lemmas $\ref{L2.1}$ and $\ref{L2.5}$ 
for completeness. 
\begin{proof}[Proof of Lemma $\ref{L2.1}$.]  
There exist a finite family $\{I_j\}_{j=1}^L$ of compact intervals in 
$(0,\infty)$ and a positive constant $c$ such that 
\begin{equation*} 
\inf_{\xi\in S^{n-1}}\max_{1\leq j\leq L}\inf_{t\in I_j}\sum_{i=1}^M
|\mathscr F(\varphi^{(i)})(t\xi)|^2\geq c,    
\end{equation*}  
where $S^{n-1}=\{\xi: |\xi|=1\}$.  
This follows from a compactness argument, since each 
$\mathscr F(\varphi^{(j)})$ is continuous.   
\par 
 Let $b_0=\max_{1\leq h\leq L}(a_h/b_h)$, where $I_h=[a_h,b_h]$.  
 Then $b_0\in (0,1)$ and if $b\in [b_0,1)$, 
$t>0$ and $1\leq h\leq L$, $h\in \Bbb Z$, we have $b^jt\in I_h$ for 
some $j\in \Bbb Z$.
\par 
Let $[m,H]$ be an interval in $(0, \infty)$ such that 
$\cup_{j=1}^L I_j\subset [m,H]$.   We take a non-negative function  
$\theta\in C_0^\infty(\Bbb R)$ such that $\theta=1$ on $[m,H]$, 
$\supp \theta\subset [m/2,2H]$.  Then 
$$\sum_{j=-\infty}^\infty\theta(b^j|\xi|)\sum_{i=1}^M
|\mathscr F(\varphi^{(i)})(b^j\xi)|^2=:\Psi(\xi) \geq c>0 \quad 
\text{for $\xi\neq 0$}. $$  
We have $\Psi(b^k\xi)=\Psi(\xi)$ for $k\in \Bbb Z$.  
Define 
$$\mathscr F(\eta^{(j)})(\xi)=\theta(|\xi|)
\overline{\mathscr F(\varphi^{(j)})(\xi)} 
\Psi(\xi)^{-1}\quad \text{for $\xi\neq 0$} $$ 
and $\mathscr F(\eta^{(j)})(0)=0$. Then, $\eta$ has all the properties 
required in the lemma. Also, from the construction, we can see that 
$\hat{\eta}\in C^k(\Bbb R^n)$ 
if $\hat{\varphi}\in C^k(\Bbb R^n\setminus\{0\})$.  
This completes the proof. 
\end{proof} 

\begin{proof}[Proof of Lemma $\ref{L2.5}$.]  
Let 
$\dashint_{B(x,t)} f(y)\,dy=|B(x,t)|^{-1}\int_{B(x,t)} f(y)\,dy$, where 
 $B(x,t)$ denotes a ball in $\Bbb R^n$ with center $x$ and radius $t$. 
Then, for $u, r>0$ and $x, z\in \Bbb R^n$, 
\begin{align*} 
|F(x-z)|
&=\left(\dashint_{B(x-z,u)}|F(y)+(F(x-z)-F(y))|^r\, dy\right)^{1/r} 
\\ 
&\leq C_r\left(\dashint_{B(x-z,u)}|F(y)|^r\, dy\right)^{1/r} 
+C_r\left(\dashint_{B(x-z,u)}|F(x-z)-F(y)|^r\, dy\right)^{1/r},  
\end{align*} 
where $C_r=1$ if $r\geq 1$ and $C_r=2^{-1+1/r}$ if $0<r< 1$.  Therefore 
\begin{equation}\label{4.1}  
 |F(x-z)| \leq C_r\left(\dashint_{B(x-z,u)}|F(y)|^r\, dy\right)^{1/r} 
+C_r  \sup_{y:|x-z-y|<u}u |\nabla F(y)|.  
\end{equation} 
\par 
If $|x-z-y|<u$, $|x-y|<u+|z|$.  Thus we have  
\begin{align*} 
|\nabla F(y)|&\leq \frac{|\nabla F(x+(y-x))|}{(1+R|x-y|)^N}(1+R(u+|z|))^N 
\\ 
&\leq |\nabla F|_{N,R}^{**}(x)(1+\delta +R|z|)^N 
\\ 
&\leq 2^N|\nabla F|_{N,R}^{**}(x)(1 +R|z|)^N 
\end{align*} 
if we choose $u=\delta/R$.  Consequently, 
\begin{equation}\label{4.2} 
\sup_{y: |x-z-y|<u} u|\nabla F(y)|\leq 2^N u|\nabla F|_{N,R}^{**}(x)(1 +R|z|)^N
\end{equation} 
with $u=\delta/R$. 
\par 
Also, if $u=\delta/R$, 
\begin{align}\label{4.3} 
&\left(\dashint_{B(x-z,u)}|F(y)|^r\, dy\right)^{1/r}
\leq \left(u^{-n}(u+|z|)^n\dashint_{B(x,u+|z|)}|F(y)|^r\, dy\right)^{1/r} 
\\ 
&\leq u^{-n/r}(u+|z|)^{n/r} M(|F|^r)(x)^{1/r}     \notag 
\\ 
&=\delta^{-n/r}(\delta+R|z|)^{n/r} M(|F|^r)(x)^{1/r} \notag 
\\ 
&\leq \delta^{-n/r}(1+R|z|)^{n/r} M(|F|^r)(x)^{1/r}.  \notag 
\end{align} 
\par 
From \eqref{4.1}, \eqref{4.2} and \eqref{4.3}, we see that  
\begin{equation*} 
|F(x-z)|\leq C_r \delta^{-n/r}(1+R|z|)^{n/r} M(|F|^r)(x)^{1/r} + 
2^N C_r u|\nabla F|_{N,R}^{**}(x)(1 +R|z|)^N.  
\end{equation*} 
 If $N=n/r$, it follows that 
\begin{equation*} 
\frac{|F(x-z)|}{(1 +R|z|)^N}\leq C_r \delta^{-N} M(|F|^r)(x)^{1/r} + 
2^N C_r \delta R^{-1}|\nabla F|_{N,R}^{**}(x).  
\end{equation*} 
Thus we have the conclusion of the lemma by 
taking the supremum in $z$ over $\Bbb R^n$. 
\end{proof}

\end{document}